\documentclass[12pt]{article}
\usepackage{a4}
\usepackage{amsmath}
\usepackage{amssymb}
\usepackage{amsfonts}
\usepackage{amsthm}
\usepackage{graphicx}
\usepackage{color}
  \newtheorem{thm}{Theorem}[section]
 
 \newtheorem{prop}[thm]{Proposition}

 \newtheorem{lemma}[thm]{Lemma}
 \newtheorem{rem}[thm]{Remark}

\renewcommand{\t}{\theta}
\renewcommand{\a}{\alpha}
\renewcommand{\b}{\beta}

\newcommand{\la}{\lambda}
 \newcommand{\eps}{\varepsilon}
 
\newcommand{\C}{\mathbb{C}}
\newcommand{\R}{\mathbb{R}}
\renewcommand{\S}{\mathbb{S}}
\newcommand{\N}{\mathbb{N}}

\newcommand{\D}{\mathbb{D}}

 \title{Schoenberg's theorem for real and complex Hilbert spheres revisited}
 \author{Christian Berg, Ana P. Peron
  \; and Emilio Porcu}
 \date{\today}

 \begin{document}

 \maketitle
 
 \begin{abstract} Schoenberg's theorem for the  complex Hilbert sphere proved by Christensen and Ressel in 1982 by Choquet theory is extended to the following result: Let $L$ denote a locally compact group and let $\overline{\D}$ denote the closed unit disc in the complex plane.  Continuous functions $f:\overline{\D}\times L\to \C$  such that $f(\xi\cdot\eta,u^{-1}v)$ is a positive definite kernel on the product of the unit sphere in $\ell_2(\C)$ and $L$ are characterized as the functions with a uniformly convergent expansion
$$
f(z,u)=\sum_{m,n=0}^\infty \varphi_{m,n}(u)z^m\overline{z}^n,
$$
where $\varphi_{m,n}$ is a double sequence of continuous positive definite functions on $L$ such that $\sum\varphi_{m,n}(e_L)<\infty$  ($e_L$ is the neutral element of $L$). It is shown how the coefficient functions $\varphi_{m,n}$ are obtained as limits  from  expansions for positive definite functions on finite dimensional complex spheres via a Rodrigues formula for disc polynomials.

Similar results are obtained for the real Hilbert sphere. 
 \end{abstract}

 2010 MSC: 43A35,33C45,33C55

{\bf Keywords}: Positive definite functions, spherical harmonics for real and complex spheres,
Gegenbauer polynomials, disc polynomials.

\section{Introduction and main results}  In his seminal paper,  Schoenberg \cite{S} introduced and characterized positive definite functions on spheres. The $d$-dimensional unit sphere of $\R^{d+1}$ is given as
\begin{equation*}\label{eq:sphere}
\mathbb S^d=\left\{x\in\mathbb R^{d+1}\mid \sum_{k=1}^{d+1} x_k^2=1\right\}, \;d\ge 1.
\end{equation*} 

For vectors $\xi,\eta$ belonging to $\S^d$, the scalar product $\xi\cdot\eta$ belongs to $[-1,1]$. 
By $\mathcal P(\S^d)$ we denote the set  of continuous functions $f:[-1,1]\to \R$ such that the kernel  $(\xi,\eta)\mapsto f(\xi\cdot\eta)$ is positive definite on $\S^d$ in the sense that  
for any $n\in\N$, arbitrary $\xi_1,\ldots,\xi_n\in\S^d$ and $c_1,\ldots,c_n\in\R$ one has
\begin{equation}\label{eq:pd}
\sum_{j,k=1}^n f(\xi_j\cdot\xi_k)c_jc_k\ge 0,
\end{equation}
i.e., the symmetric matrix $[f(\xi_j\cdot\xi_k)_{j,k=1}^n]$ is positive semidefinite.

In a general setting we recall that for an arbitrary non-empty set $X$, a kernel on $X$ is a function $k:X^2\to\C$. It is called a positive definite kernel on $X$ if for any $n\in\N$, any finite collection of points $x_1,\ldots,x_n\in X$ and numbers $c_1,\ldots,c_n \in \C$  one has
$$
\sum_{j,k=1}^n k(x_j,x_k)c_j\overline{c_k}\ge 0,
$$
i.e., the matrix $[k(x_j,x_k)_{j,k=1}^n]$ is hermitian and positive semidefinite. For a treatment of these concepts see e.g.  \cite{B:C:R}.  A positive definite kernel on $\S^d$ of the form $f(\xi\cdot\eta)$ is automatically real-valued by symmetry of the scalar product.

Let $L$ denote an arbitrary locally compact group written multiplicatively and with neutral element $e_L$. By $\mathcal P(L)$ we denote the set of continuous functions $f:L\to\C$ for which the kernel $(u,v)\mapsto f(u^{-1}v)$
is  positive definite on $L$. This class of functions is very important in the theory of unitary representations of $L$ on Hilbert spaces, see \cite{D},\cite{sasvari}.

Schoenberg's characterization of the class $\mathcal P(\S^d)$ was then extended by \cite{B:P}, who considered the class $\mathcal P(\S^d,L)$ of continuous functions $f:[-1,1]\times L\to\C$ such that the kernel
$((\xi,u),(\eta,v))\mapsto f(\xi\cdot\eta,u^{-1}v)$ is positive definite on $\S^d\times L$.

Their result is reported here for a self-contained exposition.

\begin{thm}\label{thm:BP1}{\rm (Theorem 3.3 in \cite{B:P})} Let $d\in\N$  and let $f:[-1,1]\times L\to \mathbb C$ be a continuous function. Then $f$ belongs to $\mathcal P(\S^d,L)$ 
 if and only if there exists
 a sequence of functions $(\varphi_{n,d})_{n\ge 0}$ from $\mathcal P(L) $ with $\sum_n \varphi_{n,d}(e_L)<\infty$ 
such that
\begin{equation}\label{eq:expand}
f(x,u)=\sum_{n=0}^\infty \varphi_{n,d}(u) c_n(d,x),\quad x\in[-1,1],\;u\in L.
\end{equation}
The above expansion is uniformly convergent for $(x,u)\in [-1,1]\times L$, and we have
\begin{equation}\label{eq:coef}
\varphi_{n,d}(u)=\frac{N_n(d)\sigma_{d-1}}{\sigma_d}\int_{-1}^1 f(x,u)c_n(d,x)(1-x^2)^{d/2-1}\, dx.
\end{equation}
\end{thm}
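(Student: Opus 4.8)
The plan is to prove the two implications separately, the ``if'' part being routine and the ``only if'' part carrying the weight. For the ``if'' direction: each normalized Gegenbauer polynomial $c_n(d,\cdot)$, with $c_n(d,1)=1$, gives via the addition theorem for spherical harmonics a positive definite kernel $(\xi,\eta)\mapsto c_n(d,\xi\cdot\eta)$ on $\S^d$ --- this is the classical fact behind Schoenberg's theorem. Since $\varphi_{n,d}\in\mathcal P(L)$, the kernel $(u,v)\mapsto\varphi_{n,d}(u^{-1}v)$ is positive definite on $L$, so by the Schur product theorem $\varphi_{n,d}(u^{-1}v)\,c_n(d,\xi\cdot\eta)$ is positive definite on $\S^d\times L$, and so is any convergent sum of such kernels. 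It then suffices to check convergence and continuity: from $|\varphi_{n,d}(u)|\le\varphi_{n,d}(e_L)$ (valid for any function in $\mathcal P(L)$) and $|c_n(d,x)|\le1$ on $[-1,1]$, the series in \eqref{eq:expand} is dominated by $\sum_n\varphi_{n,d}(e_L)<\infty$, hence converges uniformly on $[-1,1]\times L$ to a continuous function, which therefore lies in $\mathcal P(\S^d,L)$.

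For the converse, assume $f\in\mathcal P(\S^d,L)$ and define $\varphi_{n,d}$ by \eqref{eq:coef}; its continuity on $L$ follows from joint continuity of $f$ together with local boundedness and dominated convergence in the integral. The key step is $\varphi_{n,d}\in\mathcal P(L)$. First I would record that if $K$ is a positive definite kernel on $\S^d\times L$ and $g:\S^d\to\C$ is continuous, then
$$
(u,v)\longmapsto\int_{\S^d}\int_{\S^d}K\big((\xi,u),(\eta,v)\big)\,g(\xi)\,\overline{g(\eta)}\,d\omega(\xi)\,d\omega(\eta)
$$
is positive definite on $L$, since its quadratic forms are limits of Riemann sums of quadratic forms for $K$ (with coefficients of the form $w_p\,c_k\,g(\xi_p)$). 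Applying this with $K=f(\xi\cdot\eta,u^{-1}v)$ and $g=Y_{n,j}$, where $\{Y_{n,j}\}_{j=1}^{N_n(d)}$ is an orthonormal basis of degree-$n$ spherical harmonics, and evaluating the inner integral by the Funk--Hecke formula, the outcome is $\frac{\sigma_d}{N_n(d)}\,\varphi_{n,d}(u^{-1}v)$, and since the constant is positive this shows $\varphi_{n,d}\in\mathcal P(L)$.

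It remains to establish summability and reconstruction. Setting all points equal to $e_L$ in the positive-definiteness condition for $f$ shows $f(\cdot,e_L)\in\mathcal P(\S^d)$; classical Schoenberg then gives $f(x,e_L)=\sum_n a_n c_n(d,x)$ with $a_n\ge0$ and $\sum_n a_n=f(1,e_L)<\infty$, and orthogonality identifies $a_n=\varphi_{n,d}(e_L)$, so $\sum_n\varphi_{n,d}(e_L)<\infty$. For fixed $u$, formula \eqref{eq:coef} exhibits $\varphi_{n,d}(u)$ as the $n$-th Gegenbauer--Fourier coefficient of $x\mapsto f(x,u)$ relative to the weight $(1-x^2)^{d/2-1}$; the series $\sum_n\varphi_{n,d}(u)c_n(d,x)$ converges uniformly in $(x,u)$ by the summability just proved, to a continuous function whose slices in $x$ have the same Gegenbauer coefficients as $f(\cdot,u)$, so completeness of the Gegenbauer polynomials in the weighted $L^2$ space forces equality; this yields \eqref{eq:expand} with uniform convergence.

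I expect the main obstacle to be the identification step in the converse: checking that integrating $f(\xi\cdot\eta,u^{-1}v)$ against $Y_{n,j}(\xi)\overline{Y_{n,j}(\eta)}$ over $\S^d\times\S^d$ really preserves positive definiteness --- the Riemann-sum approximation must be controlled uniformly, using continuity of $f$ and of the $Y_{n,j}$ on the compact sphere --- and, alongside it, the bookkeeping of the constants $N_n(d)$, $\sigma_d$, $\sigma_{d-1}$ in Funk--Hecke so that the factor relating the integrated kernel to $\varphi_{n,d}(u^{-1}v)$ comes out manifestly positive. The Schur-product argument, the domination by $\sum_n\varphi_{n,d}(e_L)$, and the reconstruction via completeness of Gegenbauer polynomials are all standard.
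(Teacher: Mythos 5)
Your argument is correct, but note that the paper you are being compared against does not actually prove Theorem~\ref{thm:BP1}: it quotes it from \cite{B:P}, and the closest in-paper analogue of its proof technique is the treatment of the complex case (Lemma~\ref{thm:technical} combined with the known theorem for trivial $L$). Against that template, your route differs in one essential step, namely how $\varphi_{n,d}\in\mathcal P(L)$ is obtained. You integrate the kernel $f(\xi\cdot\eta,u^{-1}v)$ against $Y_{n,j}(\xi)\overline{Y_{n,j}(\eta)}$, justify positivity by Riemann sums with nonnegative weights, and evaluate via Funk--Hecke, which indeed yields $\tfrac{\sigma_d}{N_n(d)}\varphi_{n,d}(u^{-1}v)$ with a positive constant, so the step is sound (the Riemann-sum approximation is unproblematic because $\omega_d$ is a finite measure on a compact set and the integrand is continuous). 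The approach of \cite{B:P}, mirrored in this paper's Lemma~\ref{thm:technical}, is instead to group points and scalars: for fixed $u_1,\dots,u_n$, $c_1,\dots,c_n$ the function $F(x)=\sum_{j,k}f(x,u_j^{-1}u_k)c_j\overline{c_k}$ lies in $\mathcal P(\S^d)$, and then classical Schoenberg plus linearity of the coefficient functional \eqref{eq:coef} gives $\sum_{j,k}\varphi_{n,d}(u_j^{-1}u_k)c_j\overline{c_k}\ge 0$; this avoids Funk--Hecke and the constant bookkeeping altogether, at the price of invoking the scalar Schoenberg theorem at that point rather than only for summability. The remaining ingredients of your proof --- the Schur-product argument for the ``if'' direction, $|\varphi_{n,d}(u)|\le\varphi_{n,d}(e_L)$, summability via $f(\cdot,e_L)\in\mathcal P(\S^d)$ and orthogonality, and reconstruction of $f$ from equality of Gegenbauer coefficients of two continuous functions --- coincide with the standard argument and are all in order; just make sure to state explicitly that uniqueness of the coefficients (term-by-term integration of the uniformly convergent series) is what gives \eqref{eq:coef} also in the ``if'' direction, since the theorem asserts that formula for any admissible expansion.
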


Here we have used the notation
\begin{equation}\label{eq:Geg}
c_n(d,x)=C_n^{(\la)}(x)/C_n^{(\la)}(1),\quad \la=(d-1)/2,\; d\ge 2,
\end{equation}
for the ultraspherical polynomials $c_n(d,x)$ as normalized Gegenbauer polynomials $C_n^{(\la)}(x)$ for the parameter $\la=(d-1)/2$, while $c_n(1,x)=T_n(x)$ are the Chebyshev polynomials, cf. \cite{A:A:R}, \cite{B:P}. For later use we recall that
\begin{equation}\label{eq:gegval1}
C_n^{(\la)}(1)=\frac{(2\la)_n}{n!}, \quad \la >0.
\end{equation}

 The constant $\sigma_d$ denotes the total mass of the surface measure $\omega_d$  on $\S^d$
\begin{equation}\label{eq:mass} 
\sigma_d=\omega_d(\S^d)=\frac{2\pi^{(d+1)/2}}{\Gamma((d+1)/2)}.
\end{equation}
Note that
$$
\frac{\sigma_{d-1}}{\sigma_d}\int_{-1}^1 (1-x^2)^{d/2-1}\,d x=1.
$$
Finally, $N_n(d)$ is the dimension of a space of spherical harmonics, cf. \cite[(11)]{B:P},\cite{M}, and is given by
\begin{equation}\label{eq:dim}
N_n(d)=\frac{(d)_{n-1}}{n!}(d+2n-1),\;n\ge 1,\quad N_0(d)=1.
\end{equation} 

Schoenberg's Theorem for $\mathcal P(\S^d)$ is the special case of the previous theorem, where the group $L=\{e_L\}$ is trivial. The functions in $\mathcal P(L)$ are then just non-negative constants.

The coefficient functions $\varphi_{n,d},n\ge 0$, of Theorem~\ref{thm:BP1} are called the $d$-Schoenberg  functions associated to $f$.

If we restrict the vectors $\xi_1,\ldots,\xi_n\in \S^d$  to lie on the subsphere
$\S^{d-1}$, identified with the equator of $\S^d$, we see that $\mathcal P(\S^d,L)\subseteq \mathcal P(\S^{d-1},L)$. 

We also consider
\begin{equation}\label{eq:infinity}
\mathcal P(\S^{\infty},L):=\bigcap_{d=1}^\infty \mathcal P(\S^d,L),
\end{equation} 
which is the set of continuous functions  $f:[-1,1]\times L\to \C$ such that the kernel 
\begin{equation}\label{eq:pdL}  
((\xi,u),(\eta,v))\mapsto  f(\xi\cdot\eta, u^{-1}v)
\end{equation}
 is positive definite  on $\S^d\times L$ for all $d\in\N$. We note in passing that the notation 
$\mathcal P(\S^{\infty},L)$ suggests an intrinsic definition using the real Hilbert sphere
$$
\S^\infty=\left\{(x_k)_{k\in\N}\in \R^\N \mid \sum_{k=1}^\infty x_k^2=1\right\},
$$
which is the unit sphere in the Hilbert sequence space $\ell_2(\R)$ of square summable real sequences.
The intrinsic definition of $\mathcal P(\S^{\infty},L)$ is as the set of continuous functions 
$f:[-1,1]\times L\to \C$ such that the kernel of Equation \eqref{eq:pdL} is positive definite on
$\S^\infty\times L$. This identification is made explicit in \cite{B:P}.

The following holds:

\begin{thm}\label{thm:BP2}{\rm (Theorem 3.10 in \cite{B:P})} Let $L$ denote a locally compact group and let 
$f:[-1,1]\times L\to \mathbb C$ be a continuous function. Then
$f$ belongs to $\mathcal P(\S^\infty,L)$ if and only if there exists
 a sequence $(\varphi_{n})_{n\ge 0}$ from $\mathcal P(L) $ with $\sum_n \varphi_{n}(e_L)<\infty$ 
such that
\begin{equation}\label{eq:expandp}
f(x,u)=\sum_{n=0}^\infty \varphi_{n}(u) x^n.
\end{equation}
The above expansion is uniformly convergent for $(x,u)\in [-1,1]\times L$.
\end{thm}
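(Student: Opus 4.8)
The plan is to prove the two implications separately. For \emph{sufficiency}, suppose $f(x,u)=\sum_{n\ge0}\varphi_n(u)x^n$ with $\varphi_n\in\mathcal P(L)$ and $\sum_n\varphi_n(e_L)<\infty$. On each $\S^d$ the kernel $(\xi,\eta)\mapsto\xi\cdot\eta$ is positive definite, being a Gram kernel, so by the Schur product theorem $(\xi\cdot\eta)^n$ is positive definite; multiplying by the positive definite kernel $(u,v)\mapsto\varphi_n(u^{-1}v)$ shows that $((\xi,u),(\eta,v))\mapsto\varphi_n(u^{-1}v)(\xi\cdot\eta)^n$ is positive definite on $\S^d\times L$. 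Since $|\varphi_n(u^{-1}v)(\xi\cdot\eta)^n|\le\varphi_n(e_L)$ and $\sum_n\varphi_n(e_L)<\infty$, the series converges uniformly to $f(\xi\cdot\eta,u^{-1}v)$, which is therefore positive definite and continuous on $\S^d\times L$; as $d$ is arbitrary, $f\in\mathcal P(\S^\infty,L)$.

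For \emph{necessity}, let $f\in\mathcal P(\S^\infty,L)$ and $M:=f(1,e_L)$; then $|f|\le M$ on $[-1,1]\times L$. For every $d\ge2$, Theorem~\ref{thm:BP1} provides the $d$-Schoenberg functions $\varphi_{n,d}\in\mathcal P(L)$ with $\sum_n\varphi_{n,d}(e_L)=M$ (evaluate the expansion at $x=1$) and $|\varphi_{n,d}(u)|\le\varphi_{n,d}(e_L)$. The argument rests on the classical integral representation
$$
c_n(d,x)=\frac{\int_0^\pi\bigl(x+i\sqrt{1-x^2}\cos\phi\bigr)^n\sin^{d-2}\phi\,d\phi}{\int_0^\pi\sin^{d-2}\phi\,d\phi},
$$
which I would use in two ways. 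Reading the right-hand side as an average against the probability density proportional to $\sin^{d-2}\phi$, the elementary bound $|(x+ib)^n-x^n|\le n|b|$ (holding whenever $|x|,|x+ib|\le1$) together with $\int_0^\pi|\cos\phi|\sin^{d-2}\phi\,d\phi\big/\int_0^\pi\sin^{d-2}\phi\,d\phi=O(d^{-1/2})$ gives $c_n(d,x)\to x^n$ uniformly on $[-1,1]$ as $d\to\infty$, for each fixed $n$. Next, $|x+i\sqrt{1-x^2}\cos\phi|^2=1-(1-x^2)\sin^2\phi$, so $|c_n(d,x)|$ is at most the average of $(1-(1-x^2)\sin^2\phi)^{n/2}$; since the densities proportional to $\sin^{d-2}\phi$ grow more concentrated at $\phi=\pi/2$ as $d$ increases, whereas this integrand is smallest at $\pi/2$ and symmetric about it, a Chebyshev (FKG) argument shows the average is largest at $d=2$, so for each fixed $x_0\in(0,1)$,
$$
\sup_{d\ge2}|c_n(d,x_0)|\le\frac1\pi\int_0^\pi\bigl(1-(1-x_0^2)\sin^2\phi\bigr)^{n/2}\,d\phi\ \longrightarrow\ 0\qquad(n\to\infty).
$$

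Now the limiting argument. Given $\eps>0$, choose $x_0<1$ with $M-f(x_0,e_L)<\eps$; since $M-f(x_0,e_L)=\sum_n\varphi_{n,d}(e_L)\bigl(1-c_n(d,x_0)\bigr)$ has nonnegative terms, the second estimate yields a uniform tail bound $\sum_{n>N}\varphi_{n,d}(e_L)\le2\eps$ for all $d\ge2$ and all $N\ge N_0(\eps)$, hence (via $|\varphi_{n,d}(u)|\le\varphi_{n,d}(e_L)$) a tightness estimate uniform in $d$ and $u$. Fix $u$ and pass to a subsequence $d_j$ along which $\varphi_{n,d_j}(u)$ converges for every $n$, say to $\psi_n$. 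Then $f(x,u)=\sum_n\varphi_{n,d_j}(u)c_n(d_j,x)$, the tail bound, and $c_n(d_j,x)\to x^n$ give $|f(x,u)-\sum_{n\le N}\psi_nx^n|\le2\eps$ for $N\ge N_0(\eps)$, so $f(x,u)=\sum_n\psi_nx^n$ on $[-1,1]$ with $\sum_n|\psi_n|<\infty$; uniqueness of power-series coefficients makes $\psi_n$ independent of the subsequence, so $\varphi_{n,d}(u)\to\varphi_n(u):=\psi_n$ along the full sequence, for every $u$. Being a pointwise limit of positive definite functions, $\varphi_n$ is positive definite; the tightness estimate gives $\sum_n\varphi_n(e_L)=M$; the expansion $f(x,u)=\sum_n\varphi_n(u)x^n$ converges uniformly since its remainder is dominated by $M-\sum_{n\le N}\varphi_n(e_L)\to0$, independently of $(x,u)$; and each $\varphi_n$ is continuous by induction from $\varphi_n(u)=\lim_{x\to0}x^{-n}\bigl(f(x,u)-\sum_{k<n}\varphi_k(u)x^k\bigr)$, a limit uniform in $u$ by the same remainder bound. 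Thus $\varphi_n\in\mathcal P(L)$ and the expansion holds.

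I expect the main obstacle to be precisely the uniform tail (tightness) bound: ruling out that mass of the $d$-Schoenberg sequences escapes to $n=\infty$ as $d\to\infty$. Monotonicity in $d$ of the partial sums $\sum_{n\le N}\varphi_{n,d}(e_L)$ is by itself insufficient (the corresponding double limit need not interchange, as a simple minimax example shows); what forces no escape is the quantitative, $d$-uniform decay in $n$ of $\sup_{d\ge2}|c_n(d,x_0)|$ extracted from the integral representation of the Gegenbauer polynomials, together with the stochastic ordering of the measures proportional to $\sin^{d-2}\phi$.
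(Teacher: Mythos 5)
Your proposal is correct, but it follows a genuinely different route from the paper. The paper does not reprove Theorem~\ref{thm:BP2}: it quotes it from \cite{B:P}, where (as the introduction explains) the coefficients $\varphi_n(u)$ arise as accumulation points of the $d$-Schoenberg functions $\varphi_{n,d}(u)$ for fixed $n,u$, with uniqueness of accumulation points, relying on Ziegel-type smoothness; the paper's own contribution (Theorem~\ref{thm:main1}, Section 2) instead obtains $\varphi_{n,d}(u)\to\frac{1}{n!}\partial_x^n f(0,u)$ via the Rodrigues formula \eqref{eq:Rod}, integrations by parts justified by Propositions~\ref{thm:Z1}--\ref{thm:Z2}, and the weak convergence $\tau_{\la}\to\delta_0$ of Proposition~\ref{thm:Poin}, which buys locally uniform convergence in $u$. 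Your argument is closer in spirit to Schoenberg's original proof (cf.\ Remark~\ref{thm:remark}), but with a different tightness mechanism: from the Laplace integral representation of $c_n(d,x)$ you extract both the uniform-in-$x$ convergence $c_n(d,\cdot)\to x^n$ for fixed $n$ and, via the Chebyshev correlation inequality applied to the monotone functions $(1-(1-x_0^2)\sin^2\phi)^{n/2}$ and $\sin^{d-2}\phi$ of $\sin\phi$, the $d$-uniform bound $\sup_{d\ge2}|c_n(d,x_0)|\to0$ as $n\to\infty$; combined with $\sum_n\varphi_{n,d}(e_L)=f(1,e_L)$ and continuity at $x=1$ this yields the uniform tail estimate that makes the diagonal/uniqueness argument close, and your series-based induction recovers continuity of $\varphi_n$ without any differentiability theory. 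What your route does not give, and what the paper's Rodrigues-formula approach is designed to supply, is the convergence $\varphi_{n,d}\to\varphi_n$ uniformly on compact subsets of $L$ (the paper explicitly notes the accumulation-point method does not seem to yield this); but that extra uniformity is not needed for the statement you were asked to prove, and your proof of Theorem~\ref{thm:BP2} is complete modulo routine details (the Laplace representation for $d\ge2$, the covariance inequality, and the standard facts $|\varphi_{n,d}(u)|\le\varphi_{n,d}(e_L)$ and $|c_n(d,x)|\le1$), all of which you invoke correctly.
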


Schoenberg's Theorem for $\mathcal P(\S^\infty)$ (Theorem 2 in \cite{S}) is the special case of $L=\{e_L\}$, where
$\mathcal P(L)$ reduces to the non-negative constants.

Ziegel \cite{Z} discovered that $f\in\mathcal P(\S^d)$ is continuously differentiable of order $[(d-1)/2]$ on $]-1,1[$ and this was extended to $\mathcal P(\S^d,L)$ in \cite{B:P}. 

For $f\in\mathcal P(\S^\infty,L)$ we know that $f\in\mathcal P(\S^d,L)$ for all $d\in\N$ and we have expansions \eqref{eq:expand} of $f$ with $d$-Schoenberg functions $\varphi_{n,d}$ for each $d\in\N$. Furthermore, $f(\cdot,u)\in C^\infty(]-1,1[)$  for each $u\in L$ and
$$
\varphi_n(u)=\frac{1}{n!}\frac{\partial^n f(0,u)}{\partial x^n},\quad u\in L, n\ge 0.
$$

Using the smoothness theorem of Ziegel the following addition to Theorem~\ref{thm:BP2} was proved in \cite{B:P}:

\begin{equation}\label{eq:expandpp}
\lim_{d\to\infty} \varphi_{n,d}(u)=\varphi_n(u) ,\quad  u\in L, n\ge 0.
\end{equation}

Schoenberg did not have the special case of Equation \eqref{eq:expandpp}, where $L=\{e_L\}$, because he lacked the smoothness result of Ziegel.

In  \cite{B:P} the coefficient sequence $\varphi_n(u),n\ge 0$, was obtained  as an accumulation point of the sequence $(\varphi_{n,d}(u))_{d\ge 1}$ for $u,n$ fixed. The convergence in Equation \eqref{eq:expandpp} followed since it was possible to prove that the accumulation points were uniquely determined.  By this method it does not seem possible to prove that the convergence in Equation \eqref{eq:expandpp} is uniform for $u$ in compact subsets of $L$.   

In this paper we have a different approach which yields the local uniform convergence. It is based on  Rodrigues formula for the Gegenbauer polynomials. 

\begin{thm}\label{thm:main1} Let $f\in\mathcal P(\S^\infty,L)$ and let $\varphi_{n,d},n\ge 0,$ denote the $d$-Schoenberg functions associated to $f$. For each $n\in\N_0$, 
$$
\lim_{d\to\infty}\varphi_{n,d}(u)=\frac{1}{n!}\frac{\partial^n f(0,u)}{\partial x^n}
$$
uniformly for $u$ in compact subsets of $L$. The sequence of functions
$$
u\mapsto \frac{1}{n!}\frac{\partial^n f(0,u)}{\partial x^n}, \quad n\ge 0,
$$
belongs to $\mathcal P(L)$ and gives the coefficient sequence in Equation \eqref{eq:expandp}.
\end{thm}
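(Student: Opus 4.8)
The plan is to make the convergence quantitative by expressing $\varphi_{n,d}(u)$ as an integral operator applied to $f(\cdot,u)$ via a Rodrigues formula, and then to pass to the limit $d\to\infty$ under the integral using the uniform smoothness of $f$ guaranteed by Ziegel's theorem.

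First I would rewrite the coefficient formula \eqref{eq:coef}. Using the Rodrigues formula for Gegenbauer polynomials, $C_n^{(\la)}(x)(1-x^2)^{\la-1/2}$ is, up to an explicit constant depending on $n$ and $\la$, the $n$-th derivative of $(1-x^2)^{n+\la-1/2}$. Substituting this into \eqref{eq:coef} and integrating by parts $n$ times — the boundary terms vanish because $(1-x^2)^{n+\la-1/2}$ and its first $n-1$ derivatives vanish at $\pm 1$ as long as $\la=(d-1)/2$ is large enough, i.e. for $d$ large — transfers the $n$ derivatives onto $f$. This yields a representation of the form
$$
\varphi_{n,d}(u)=A_{n,d}\int_{-1}^1 \frac{\partial^n f(x,u)}{\partial x^n}\,(1-x^2)^{n+(d-1)/2-1}\,dx,
$$
where $A_{n,d}$ is an explicit constant built from $N_n(d)$, $\sigma_{d-1}/\sigma_d$, the Rodrigues constant, and $C_n^{(\la)}(1)$ from \eqref{eq:gegval1}. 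The next step is to compute the normalized weight: since $\tfrac{\sigma_{d-1}}{\sigma_d}\int_{-1}^1(1-x^2)^{d/2-1}\,dx=1$, the measure $\mathrm{const}\cdot(1-x^2)^{n+(d-1)/2-1}\,dx$ on $[-1,1]$ is (after checking the constant $A_{n,d}$ collapses correctly) a probability measure that concentrates at $x=0$ as $d\to\infty$. I would verify the constant identity using the Beta integral $\int_{-1}^1(1-x^2)^{s-1}\,dx=B(1/2,s)$ together with \eqref{eq:dim}, \eqref{eq:gegval1}, and \eqref{eq:mass}; this is the routine but essential bookkeeping.

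With that in hand, the limit is a standard approximate-identity argument. By Ziegel's smoothness theorem (extended to $\mathcal P(\S^\infty,L)$ in \cite{B:P}), $f(\cdot,u)\in C^\infty(]-1,1[)$ for each $u$; moreover, because $f\in\mathcal P(\S^d,L)$ for \emph{all} $d$, one gets uniform control of $\partial^n f(\cdot,u)/\partial x^n$ on a fixed neighbourhood of $0$, uniformly for $u$ in a compact set $K\subseteq L$ — this uniformity is where I expect the main obstacle, and it should be extracted from the $d'$-expansions \eqref{eq:expand} for some fixed $d'>2n$ (differentiating the uniformly convergent Gegenbauer series termwise on $]-1,1[$ and bounding the tail by $\sum_n\varphi_{n,d'}(e_L)<\infty$ together with positive definiteness, which gives $|\varphi_{n,d'}(u)|\le\varphi_{n,d'}(e_L)$). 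Since the probability measures $\mu_d$ concentrate at $0$, $\int \partial^n f(x,u)/\partial x^n\,d\mu_d(x)\to \partial^n f(0,u)/\partial x^n$, and the convergence is uniform on $K$ by the equicontinuity just established. Finally, $\varphi_n(u):=\tfrac1{n!}\partial^n f(0,u)/\partial x^n$ lies in $\mathcal P(L)$ as a uniform (hence pointwise) limit of the $\varphi_{n,d}\in\mathcal P(L)$ (the class $\mathcal P(L)$ is closed under pointwise limits since positive semidefiniteness of each finite matrix is preserved), and it is the coefficient sequence of \eqref{eq:expandp} by matching with Theorem~\ref{thm:BP2} and \eqref{eq:expandpp}, or directly by Taylor expanding the now-known smooth function $f(\cdot,u)$ at $0$ and invoking uniform convergence on $[-1,1]$.

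The one genuinely delicate point, to reiterate, is justifying that the derivative bounds on $\partial^n f/\partial x^n$ near $x=0$ are uniform in $u\in K$; everything else is either the Rodrigues/integration-by-parts computation or the concentration of an explicit Beta-type probability measure, both of which are routine.
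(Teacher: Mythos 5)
Your strategy is the same as the paper's: insert the Rodrigues formula into \eqref{eq:coef}, integrate by parts $n$ times, recognize the resulting weight as a Beta-type probability measure concentrating at $x=0$, and get uniformity in $u$ from boundedness plus equicontinuity. But there is a genuine gap, and it is not where you locate it. The delicate point is the behaviour of $\partial^k f(x,u)/\partial x^k$ as $x\to\pm1$, not near $x=0$. Ziegel-type smoothness gives $f(\cdot,u)\in C^\infty(]-1,1[)$ only on the \emph{open} interval and says nothing about the rate of blow-up at the endpoints; a priori the derivatives could grow faster than any power of $(1-x^2)^{-1}$. You need a power-type bound at two separate places. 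First, in the integration by parts: the boundary terms are products $\frac{\partial^{k}f(x,u)}{\partial x^k}\cdot\frac{d^{n-k-1}}{dx^{n-k-1}}(1-x^2)^{n+d/2-1}$, and the argument ``the weight and its first $n-1$ derivatives vanish at $\pm1$ for $d$ large'' is not sufficient when the other factor is unbounded there with unknown rate. Second, in the approximate-identity step: your integrand is the raw $\partial^n f(\cdot,u)$ against the concentrating measure, and ``concentration at $0$ plus control of the integrand near $0$'' does not give the limit — you must also show that the contribution of $\{\delta\le|x|\le1\}$ tends to $0$, which again requires bounding $\partial^n f$ near $\pm1$ relative to the weight.

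The paper supplies exactly this missing control in Propositions~\ref{thm:Z1} and~\ref{thm:Z2}: the identity $(1-x^2)\partial f/\partial x=f_1-f_2$ with $f_i\in\mathcal P(\S^d,L)$ and $\|f_i\|\le d\|f\|$, iterated to show that $(1-x^2)^k\,\partial^k f(x,u)/\partial x^k$ extends continuously to $[-1,1]\times L$. This does double duty: the boundary terms vanish because the weight derivative carries a factor $(1-x^2)^{k+d/2}$, and after absorbing $(1-x^2)^n$ into the integrand one gets
$$
\varphi_{n,d}(u)=\frac{N_n(d)}{2^n(d/2)_n}\int_{-1}^1 (1-x^2)^n\frac{\partial^{n}f(x,u)}{\partial x^n}\,d\tau_{d/2-1}(x),
$$
with a \emph{bounded} integrand, so Proposition~\ref{thm:Poin} applies directly; uniformity for $u$ in a compact $K$ is then automatic, since $(1-x^2)^n\partial^n f/\partial x^n$ is continuous on the compact set $[-1,1]\times K$, hence bounded and equicontinuous at $x=0$ uniformly in $u\in K$. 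By contrast, the step you flag as the main obstacle — uniform control near $x=0$ for $u\in K$ — is comparatively easy (your sketch via termwise differentiation of a fixed $d'$-expansion could be made to work using interior estimates for Gegenbauer polynomials, but it becomes unnecessary once the continuous extension to $[-1,1]\times L$ is available). Your concluding statements — that the limit lies in $\mathcal P(L)$ as a pointwise limit of elements of $\mathcal P(L)$ and is identified with the coefficient sequence of \eqref{eq:expandp} — are fine.
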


The proof will be given in Section 2.

\bigskip

We are now going to explain  similar results for complex spheres.

The  complex unit sphere of (real) dimension $2q-1$ is given by
$$
\Omega_{2q}=\left\{z\in\C^{q} \mid ||z||^2=\sum_{k=1}^{q} |z_k|^2=1\right\},\qquad q\ge 1.
$$
Note that  $\Omega_{2q}$ is equal to $\S^{2q-1}$, if $\C^q$ is identified with $\R^{2q}$.
In the following we shall always assume that $q\ge 2$ because $\Omega_2=\S^1$ is an abelian group, and functions on this group are treated via Fourier series.

For vectors $\xi,\eta\in\Omega_{2q}$ the (hermitian) scalar product $\xi\cdot\eta$ belongs to the closed unit disc $\overline{\D}$, where the open disk $\D$ is defined as
$$
\D=\{z\in\C\mid |z|<1\}.
$$ 
By $\mathcal P(\Omega_{2q})$ we denote the set of continuous functions $f:\overline{\D}\to\C$ such that the kernel $(\xi,\eta)\mapsto f(\xi\cdot\eta)$ is positive definite on $\Omega_{2q}$. For a locally compact  group $L$ we denote by $\mathcal P(\Omega_{2q},L)$ the set of continuous functions $f:\overline{\D}\times L\to\C$ such that the kernel $((\xi,u),(\eta,v))\mapsto f(\xi\cdot\eta,u^{-1}v)$ is positive definite on $\Omega_{2q}\times L$. When $L=\{e_L\}$ is trivial, then $\mathcal P(\Omega_{2q},L)$ can be identified with  $\mathcal P(\Omega_{2q})$. 

In \cite{B:P:P} the authors proved the following result, which extended a result by Menegatto and Peron \cite[Theorem 4.2]{M:P} for the case $\mathcal P(\Omega_{2q})$. 

\begin{thm}\label{thm:MP} {\rm (Theorem 6.1 in \cite{B:P:P})} Let $q\ge 2$ and let $f:\overline{\D}\times L\to\C$ be a continuous function. Then $f$ belongs to $\mathcal P(\Omega_{2q},L)$ if and only if there exists a double sequence of functions $(\varphi_{m,n}^{q-2})_{m,n\ge 0}$ from $\mathcal P(L)$ with
$$
\sum_{m,n=0}^\infty \varphi_{m,n}^{q-2}(e_L)<\infty
$$
 such that
\begin{equation}\label{eq:expandcp}
f(z,u)=\sum_{m,n=0}^\infty \varphi_{m,n}^{q-2}(u) R_{m,n}^{q-2}(z),\quad z\in\overline{\D},u\in L.
\end{equation}
The above expansion is uniformly convergent on $\overline{\D}\times L$, and for $u\in L$ we have
\begin{equation}\label{eq:coefcp}
\varphi_{m,n}^{q-2}(u)=N(q;m,n)\frac{q-1}{\pi}\int_{0}^1\int_0^{2\pi}  f(re^{i\t},u)\overline{R^{q-2}_{m,n}(re^{i\t})}r(1-r^2)^{q-2}\,dr \, d\t.
\end{equation}
\end{thm}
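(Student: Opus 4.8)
The plan is to mirror, in the complex setting, the strategy that underpins Theorem~\ref{thm:BP1}. We have an orthogonal decomposition of $L^2(\Omega_{2q})$ into spaces $\mathcal H_{m,n}(q)$ of harmonic polynomials homogeneous of bidegree $(m,n)$ in $(z,\bar z)$, with $\dim \mathcal H_{m,n}(q)=N(q;m,n)$, and the corresponding reproducing (zonal) kernel is a constant multiple of the disc polynomial $R_{m,n}^{q-2}(\xi\cdot\eta)$; this is the complex analogue of the addition formula for Gegenbauer polynomials. The disc polynomials $\{R_{m,n}^{q-2}\}_{m,n\ge 0}$ form a complete orthogonal system for the weight $(q-1)/\pi\,(1-|z|^2)^{q-2}$ on $\overline{\D}$, and $R_{m,n}^{q-2}(1)=1$. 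These are the ingredients I would import (standard references are Koornwinder and the cited \cite{M:P}, \cite{B:P:P}).

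First I would prove the ``if'' direction: given the expansion \eqref{eq:expandcp} with $\varphi_{m,n}^{q-2}\in\mathcal P(L)$ and $\sum \varphi_{m,n}^{q-2}(e_L)<\infty$, one checks that each kernel $((\xi,u),(\eta,v))\mapsto \varphi_{m,n}^{q-2}(u^{-1}v)R_{m,n}^{q-2}(\xi\cdot\eta)$ is positive definite on $\Omega_{2q}\times L$ — the second factor because it is a Gram kernel coming from the reproducing kernel of $\mathcal H_{m,n}(q)$ (a Schur-product / addition-formula argument), the product of the two because a product of positive definite kernels is positive definite. Uniform convergence follows from the Weierstrass $M$-test using $|R_{m,n}^{q-2}(z)|\le R_{m,n}^{q-2}(1)=1$ on $\overline{\D}$ together with $|\varphi_{m,n}^{q-2}(u)|\le\varphi_{m,n}^{q-2}(e_L)$ and summability, and this yields continuity of $f$ and positive definiteness of the limiting kernel.

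For the ``only if'' direction, assume $f\in\mathcal P(\Omega_{2q},L)$ and define $\varphi_{m,n}^{q-2}(u)$ by the integral formula \eqref{eq:coefcp}. The content is: (i) $\varphi_{m,n}^{q-2}\in\mathcal P(L)$; (ii) $\sum_{m,n}\varphi_{m,n}^{q-2}(e_L)<\infty$; (iii) the series \eqref{eq:expandcp} reconstructs $f$ and converges uniformly. For (i), I would use the group-theoretic averaging device: integrate the positive definite kernel $f(\xi\cdot\eta,u^{-1}v)$ against $\overline{R_{m,n}^{q-2}}$ in the sphere variables, which, via invariance of $\omega_{2q}$ under the unitary group acting transitively on $\Omega_{2q}$ and the fact that $R_{m,n}^{q-2}$ is the zonal harmonic of $\mathcal H_{m,n}(q)$, projects onto that isotypic component and leaves a positive definite kernel in $(u,v)$ — so $\varphi_{m,n}^{q-2}(u^{-1}v)$ is positive definite on $L$, whence $\varphi_{m,n}^{q-2}\in\mathcal P(L)$ and in particular $\varphi_{m,n}^{q-2}(e_L)\ge 0$. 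For (iii), fix $u$: $f(\cdot,u)$ is continuous on $\overline{\D}$, and because $f(\cdot,e_L)\in\mathcal P(\Omega_{2q})$ (take $v=u=e_L$, or rather evaluate the diagonal), the classical result of Menegatto–Peron \cite[Theorem 4.2]{M:P} already gives an expansion $f(z,e_L)=\sum \varphi_{m,n}^{q-2}(e_L)R_{m,n}^{q-2}(z)$ with nonnegative summable coefficients; this controls $\sum\varphi_{m,n}^{q-2}(e_L)<\infty$, giving (ii), and then the $M$-test as above gives uniform convergence of $\sum\varphi_{m,n}^{q-2}(u)R_{m,n}^{q-2}(z)$ to a continuous function which one identifies with $f$ by uniqueness of the $L^2(\Omega_{2q})$-Fourier coefficients (the zonal projection argument again).

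I expect the main obstacle to be (ii) — uniform summability of $\varphi_{m,n}^{q-2}(e_L)$ — handled cleanly only once one knows $f(\cdot,e_L)\in\mathcal P(\Omega_{2q})$ and invokes the scalar theorem, and the careful bookkeeping needed to pass from ``$\varphi_{m,n}^{q-2}(u^{-1}v)$ positive definite for each fixed $(m,n)$'' to the full joint positive definiteness and continuity of $f$. The positive definiteness of the zonal kernel $R_{m,n}^{q-2}(\xi\cdot\eta)$ on $\Omega_{2q}$, and the projection/averaging identity that extracts $\varphi_{m,n}^{q-2}$, are where the ``addition formula for disc polynomials'' does all the real work; once it is in place the rest is the standard Schoenberg-type packaging already used for Theorem~\ref{thm:BP1}.
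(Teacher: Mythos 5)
Your outline is sound, but note that this paper does not prove Theorem~\ref{thm:MP} at all: it is imported verbatim from Theorem 6.1 of \cite{B:P:P} (which in turn builds on the scalar case in \cite{M:P}), so there is no in-paper proof to compare against. Your sketch --- addition formula for disc polynomials giving positive definiteness of the zonal kernels, Schur product and the $M$-test for the ``if'' part, and the invariant-projection argument plus the scalar Menegatto--Peron theorem at $u=e_L$ for membership in $\mathcal P(L)$, summability and reconstruction --- is essentially the same harmonic-analysis strategy used in the cited source, so it is an accurate reconstruction rather than a different route.
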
 

Here 
$$
N(q;m,n)=\frac{m+n+q-1}{q-1}\binom{m+q-2}{m}\binom{n+q-2}{n}
$$
is the dimension of a certain finite-dimensional space, see \cite{K2},\cite{V:K}.
The functions    $R^{q-2}_{m,n}(z)$ belong to the class of disc polynomials given in \cite{K2} for $\a>-1$ as
\begin{eqnarray*}\label{eq:discpol}
R^\a_{m,n}(r e^{i\t})=r^{|m-n|}e^{i(m-n)\t}R^{(\a,|m-n|)}_{\min(m,n)}(2r^2-1),\quad 0\le r\le 1,\;0\le \t<2\pi
\end{eqnarray*}
and 
\begin{equation}\label{eq:Jacobinorm}
R^{(\a,\b)}_k(x)=P^{(\a,\b)}_k(x)/P^{(\a,\b)}_k(1),\quad \a,\b>-1,\;k\in \N_0
\end{equation}
are normalized Jacobi polynomials $P^{(\a,\b)}_k$,  cf. \cite{A:A:R}.

See \cite{Wu} for other expressions and properties of the disc polynomials.

Like the case of real spheres we have
$$
\mathcal P(\Omega_{2(q+1)},L) \subseteq \mathcal P(\Omega_{2q},L),
$$
and we consider the set
\begin{equation}\label{eq:cpsphereinfty}
\mathcal P(\Omega_{\infty},L):=\bigcap_{q=2}^\infty \mathcal P(\Omega_{2q},L),
\end{equation}
which can be identified with the set of continuous functions $f:\overline{\D}\times L\to\C$ such that the kernel $((\xi,u),(\eta,v))\mapsto f(\xi\cdot \eta,u^{-1}v)$ is positive definite on 
$\Omega_\infty\times L$, where
$$
\Omega_\infty=\left\{(z_k)_{k\in\N}\in \C^\N \mid \sum_{k=1}^\infty |z_k|^2=1 \right\}
$$
is the unit sphere in the Hilbert sequence space $\ell_2(\C)$ of square summable complex sequences.

The second purpose of this paper is to give a proof of the following result:

\begin{thm}\label{thm:Schinftycp} Let $L$ denote a locally compact group and let $f:\overline{\D}\times L\to\C$ be a continuous function. Then $f$ belongs to $\mathcal P(\Omega_{\infty},L)$ if and only if there  exists a double sequence of functions $(\varphi_{m,n})_{m,n\ge 0}$ from $\mathcal P(L)$ with
$$
\sum_{m,n=0}^\infty \varphi_{m,n}(e_L)<\infty
$$
such that
\begin{equation}\label{eq:expandcpinfty}
f(z,u)=\sum_{m,n=0}^\infty \varphi_{m,n}(u)z^m\overline{z}^n,\quad z\in\overline{\D},u\in L.
\end{equation}
The series in Equation \eqref{eq:expandcpinfty} is uniformly convergent on $\overline{\D}\times L
$.
\end{thm}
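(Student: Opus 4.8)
\medskip
\noindent\emph{Plan of proof.} The sufficiency is the routine direction. Given a double sequence $(\varphi_{m,n})_{m,n\ge 0}$ from $\mathcal P(L)$ with $\sum_{m,n}\varphi_{m,n}(e_L)<\infty$, I observe that on $\Omega_\infty\times L$ the kernel $((\xi,u),(\eta,v))\mapsto (\xi\cdot\eta)^m\,\overline{(\xi\cdot\eta)}^{\,n}\,\varphi_{m,n}(u^{-1}v)$ is positive definite for each pair $(m,n)$: the kernel $(\xi,\eta)\mapsto\xi\cdot\eta$ and its conjugate are positive definite on $\Omega_\infty$, so by the Schur product theorem so are their powers and products, and tensoring with the positive definite kernel $\varphi_{m,n}(u^{-1}v)$ on $L$ preserves positive definiteness. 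Since $|\xi\cdot\eta|\le 1$ and $|\varphi_{m,n}(u^{-1}v)|\le\varphi_{m,n}(e_L)$, the series $\sum_{m,n}\varphi_{m,n}(u)z^m\bar z^n$ converges uniformly on $\overline{\D}\times L$; a uniformly convergent series of positive definite kernels is again positive definite and continuous, so the sum defines an element of $\mathcal P(\Omega_\infty,L)$.

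For the necessity, let $f\in\mathcal P(\Omega_\infty,L)$, so that $f\in\mathcal P(\Omega_{2q},L)$ for every $q\ge 2$. Theorem~\ref{thm:MP} then provides, for each $q$, functions $\varphi^{q-2}_{m,n}\in\mathcal P(L)$ with $f(z,u)=\sum_{m,n}\varphi^{q-2}_{m,n}(u)R^{q-2}_{m,n}(z)$ uniformly on $\overline{\D}\times L$. Evaluating at $z=1$, $u=e_L$ and using $R^{q-2}_{m,n}(1)=1$ gives $\sum_{m,n}\varphi^{q-2}_{m,n}(e_L)=f(1,e_L)$, a bound independent of $q$; moreover $0\le\varphi^{q-2}_{m,n}(e_L)$ and $|\varphi^{q-2}_{m,n}(u)|\le\varphi^{q-2}_{m,n}(e_L)$ since $\varphi^{q-2}_{m,n}\in\mathcal P(L)$.

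The heart of the matter is to prove that, for each fixed $(m,n)$, the limit $\varphi_{m,n}(u):=\lim_{q\to\infty}\varphi^{q-2}_{m,n}(u)$ exists uniformly for $u$ in compact subsets of $L$ and equals $\frac{1}{m!\,n!}\,\frac{\partial^{m+n}f}{\partial z^m\,\partial\bar z^n}(0,u)$, the $(m,n)$-th Taylor coefficient of $f(\cdot,u)$ at the origin (the derivatives taken in the Wirtinger sense). To this end I would substitute into the coefficient formula \eqref{eq:coefcp} a Rodrigues formula for the disc polynomials, writing $\overline{R^{q-2}_{m,n}(z)}\,(1-|z|^2)^{q-2}$ as a constant times a mixed Wirtinger derivative of total order $m+n$ applied to a weight which, up to a monomial factor, is a positive power of $1-|z|^2$; integrating by parts $m+n$ times over $\D$ (the boundary terms vanishing because of the factors $1-|z|^2$) then expresses $\varphi^{q-2}_{m,n}(u)$ as the integral of a fixed $(m,n)$-th mixed derivative of $f$ against a probability weight of the form $c(q;m,n)(1-|z|^2)^{q-2+\min(m,n)}\,dA(z)$, which concentrates at the origin as $q\to\infty$. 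The smoothness of $f(\cdot,u)$ and the joint continuity in $(z,u)$ of its mixed derivatives, needed for the local uniformity in $u$, come from $f\in\mathcal P(\Omega_{2q'},L)$ for all $q'$ via a complex analogue of Ziegel's smoothness theorem. Once this limit relation is in place, each $\varphi_{m,n}$ is a locally uniform limit of members of $\mathcal P(L)$, hence lies in $\mathcal P(L)$, and Fatou's lemma gives $\sum_{m,n}\varphi_{m,n}(e_L)\le f(1,e_L)<\infty$.

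Finally I would identify $f$ with $\sum_{m,n}\varphi_{m,n}(u)z^m\bar z^n$. Since $R^{(\alpha,\beta)}_k(x)\to\big((1+x)/2\big)^k$ as $\alpha\to\infty$, the definition of the disc polynomials gives $R^{q-2}_{m,n}(z)\to z^m\bar z^n$ as $q\to\infty$; passing to the limit termwise in $f(z,u)=\sum_{m,n}\varphi^{q-2}_{m,n}(u)R^{q-2}_{m,n}(z)$ for $z$ in the open disc yields $f(z,u)=\sum_{m,n}\varphi_{m,n}(u)z^m\bar z^n$ on $\D\times L$, the termwise passage being justified by a tail estimate uniform in $q$ and $u$ obtained from $|\varphi^{q-2}_{m,n}(u)|\le\varphi^{q-2}_{m,n}(e_L)$, $\sum_{m,n}\varphi^{q-2}_{m,n}(e_L)=f(1,e_L)$, and bounds on $|R^{q-2}_{m,n}(z)|$ on compact subsets of $\D$ that are uniform in $q$. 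By the sufficiency part the right-hand side converges uniformly on $\overline{\D}\times L$ to a continuous function, so it agrees with $f$ on all of $\overline{\D}\times L$, and evaluating at $z=1$, $u=e_L$ then also gives $\sum_{m,n}\varphi_{m,n}(e_L)=f(1,e_L)$. The main obstacles are the Rodrigues step of the third paragraph — finding the right Rodrigues formula for disc polynomials and carrying out the integration by parts together with the smoothness of $f$ rigorously and with the required uniformity in $u$ — and, secondarily, the uniform-in-$q$ estimates for $|R^{q-2}_{m,n}(z)|$ on compact subsets of $\D$ needed in the identification step.
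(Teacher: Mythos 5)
Your sufficiency argument and your third paragraph (Rodrigues formula for the disc polynomials, $m+n$ integrations by parts with vanishing boundary terms, concentration of the resulting weight at $z=0$, smoothness and joint continuity supplied by the complex analogue of Ziegel's theorem) coincide with the paper's proof of Theorem~\ref{thm:main2} and are sound in outline (one slip: after the integrations by parts the weight is a constant times $(1-|z|^2)^{m+n+q-2}\,dx\,dy$, not $(1-|z|^2)^{\min(m,n)+q-2}$; this does not affect the concentration argument). The genuine gap is in your final identification step, where you let $q\to\infty$ termwise in $f(z,u)=\sum_{m,n}\varphi^{q-2}_{m,n}(u)R^{q-2}_{m,n}(z)$. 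The tail estimate you invoke is not available from the ingredients you cite: the only uniform-in-$q$ control on the coefficients is $\sum_{m,n}\varphi^{q-2}_{m,n}(e_L)=f(1,e_L)$, which does not prevent the coefficient mass from drifting to large $(m,n)$ as $q\to\infty$, so you would need $\sup_{q}|R^{q-2}_{m,n}(z)|$ to become small as $m+n\to\infty$ for fixed $|z|<1$. The bounds that are uniform in $q$ (such as $|R^{q-2}_{m,n}(z)|\le 1$, which follows from positive definiteness and normalization at $z=1$, or the factor $|z|^{|m-n|}$ in the definition) give no decay along the diagonal $m=n$. A uniform-in-$q$ decay of this kind is essentially the uniformity in $m,n$ of the limit \eqref{eq:disclim}, and Remark~\ref{thm:remark} states explicitly that the authors could not settle that question; it is precisely the obstruction to the Schoenberg-style proof you are attempting.

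The paper does not obtain the expansion \eqref{eq:expandcpinfty} as a $q\to\infty$ limit at all. For fixed $u$ and $c\in\C$ it forms $F_{u,c}(z)=f(z,e_L)(1+|c|^2)+f(z,u)\overline{c}+f(z,u^{-1})c$, shows by the elementary Lemma~\ref{thm:technical} that $F_{u,c}\in\mathcal P(\Omega_\infty)$, and then invokes the scalar theorem of Christensen and Ressel (proved by Choquet theory) to expand each $F_{u,c}$ in the monomials $z^m\overline{z}^n$ with nonnegative summable coefficients; the linear combination of the cases $c=1,-1,i$ recovers $f(z,u)=\sum_{m,n}\varphi_{m,n}(u)z^m\overline{z}^n$ exactly, with the stated summability at $e_L$. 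Only after this is the Rodrigues-limit argument (your paragraph three, i.e.\ Theorem~\ref{thm:main2}) used, namely to see that the coefficient functions $\varphi_{m,n}$, being locally uniform limits of the $\varphi^{q-2}_{m,n}\in\mathcal P(L)$ from Theorem~\ref{thm:MP}, belong to $\mathcal P(L)$. So to complete your route you must either supply the missing uniform-in-$q$ bound on the disc polynomials (open, according to the paper) or import the Christensen--Ressel theorem through a polarization step as the paper does; with that replacement the rest of your outline goes through.
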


When $L=\{e_L\}$ is  trivial and $\mathcal P(L)$ reduces to the set of non-negative constants, Theorem~\ref{thm:Schinftycp} is a result of Christensen and Ressel \cite{C:R}, also treated in \cite[Chapter 5.4]{B:C:R}. 

For a function $f\in \mathcal P(\Omega_{\infty},L)$ we have an expansion
\eqref{eq:expandcp} for each $q\ge 2$ due to \eqref{eq:cpsphereinfty}. The connection to 
\eqref{eq:expandcpinfty} is given by the following result:

\begin{thm}\label{thm:main2} For the uniquely determined coefficient functions $\varphi_{m,n}^{q-2},\varphi_{m,n}\in\mathcal P(L)$ from \eqref{eq:expandcp} and \eqref{eq:expandcpinfty}, we have for  $m,n\in\N_0$
$$
\lim_{q\to\infty}\varphi_{m,n}^{q-2}(u)=\varphi_{m,n}(u)=\frac{1}{m!n!}\frac{\partial^{m+n} f(0,u)}{\partial z^m \partial\overline{z}^n }
$$
uniformly for $u$ in compact subsets of $L$.
\end{thm}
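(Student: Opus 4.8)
The plan is to adapt the proof of Theorem~\ref{thm:main1}, replacing the Rodrigues formula for Gegenbauer polynomials by one for the disc polynomials. Write $\partial_z=\tfrac12(\partial_x-i\partial_y)$ and $\partial_{\bar z}=\tfrac12(\partial_x+i\partial_y)$. The first step is the identity
$$
(1-|z|^2)^{\a}R_{m,n}^{\a}(z)=\frac{(-1)^{m+n}}{(\a+1)_{m+n}}\,\partial_z^{\,n}\partial_{\bar z}^{\,m}\bigl[(1-|z|^2)^{\a+m+n}\bigr],\qquad \a>-1,
$$
valid for all $m,n\ge 0$. It can be checked directly: applying $\partial_{\bar z}^{\,m}$ to $(1-z\bar z)^{\a+m+n}$ gives a constant times $z^{m}(1-z\bar z)^{\a+n}$, and then applying $\partial_z^{\,n}$ and collecting terms by Leibniz' rule produces $z^{m-n}$ (when $m\ge n$) times a constant multiple of the Jacobi polynomial $P_{\min(m,n)}^{(\a,|m-n|)}(2|z|^2-1)$; the constant works out precisely because $(\a+n+1)_m(\a+1)_n=(\a+1)_{m+n}$ and $P_n^{(\a,\b)}(1)=(\a+1)_n/n!$, and the case $m<n$ follows by complex conjugation.

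Next I would rewrite the integral \eqref{eq:coefcp} in terms of area measure $dA$ on $\overline{\D}$, use $\overline{R_{m,n}^{q-2}(z)}=R_{n,m}^{q-2}(z)$, insert the Rodrigues formula for $R_{n,m}^{q-2}$ (so that the operator appearing is $\partial_z^{\,m}\partial_{\bar z}^{\,n}$), and integrate by parts $m+n$ times to shift the derivatives from the weight onto $f$. Each boundary term carries a factor $(1-|z|^2)^{q-2+m+n-j}$ with $j\le m+n-1$, hence a power of $1-|z|^2$ at least $q-1$; since $f(\cdot,u)$ is $C^\infty$ in $\D$ by Theorem~\ref{thm:Schinftycp} and its derivatives of order $\le m+n-1$ grow at most polynomially in $(1-|z|^2)^{-1}$ as $|z|\to1$, this factor dominates once $q$ is large, so integrating over $\{|z|<\rho\}$ and letting $\rho\to1$ is legitimate (which is all we need, since $q\to\infty$). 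Simplifying the constants with $N(q;m,n)$, $\binom{k+q-2}{k}=(q-1)_k/k!$ and $(q-1)_{m+n}(q+m+n-1)=(q-1)_{m+n+1}$ yields
$$
\varphi_{m,n}^{q-2}(u)=\frac{(q-1)_m\,(q-1)_n}{m!\,n!\,(q-1)_{m+n}}\int_{\D}\bigl[\partial_z^{\,m}\partial_{\bar z}^{\,n}f(z,u)\bigr]\,d\mu_q(z),
$$
where $\mu_q$ is the rotation-invariant probability measure on $\D$ with density $\tfrac{q+m+n-1}{\pi}(1-|z|^2)^{q+m+n-2}$. The prefactor tends to $1/(m!\,n!)$ as $q\to\infty$, and $\mu_q$ concentrates at the origin.

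Now I would invoke Theorem~\ref{thm:Schinftycp}: $f(z,u)=\sum_{j,k\ge0}\varphi_{j,k}(u)z^{j}\bar z^{k}$ with $\varphi_{j,k}\in\mathcal P(L)$ and $\sum_{j,k}\varphi_{j,k}(e_L)<\infty$. Since $|\varphi_{j,k}(u)|\le\varphi_{j,k}(e_L)$ and these are bounded, the termwise differentiated series converges uniformly on each $\{|z|\le\rho\}\times L$, so $\partial_z^{\,m}\partial_{\bar z}^{\,n}f(z,u)=\sum_{j\ge m,\,k\ge n}\varphi_{j,k}(u)\tfrac{j!}{(j-m)!}\tfrac{k!}{(k-n)!}z^{j-m}\bar z^{k-n}$; at $z=0$ this gives $\partial_z^{\,m}\partial_{\bar z}^{\,n}f(0,u)=m!\,n!\,\varphi_{m,n}(u)$, which is the asserted second equality and shows in particular that the $\varphi_{m,n}$ are uniquely determined and lie in $\mathcal P(L)$. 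Integrating the series against the rotation-invariant measure $\mu_q$ kills all terms with $j-m\ne k-n$, and with $\int_{\D}|z|^{2i}\,d\mu_q=i!/(q+m+n)_i$ one obtains
$$
\int_{\D}\bigl[\partial_z^{\,m}\partial_{\bar z}^{\,n}f(z,u)\bigr]\,d\mu_q(z)=\sum_{i=0}^{\infty}\varphi_{m+i,\,n+i}(u)\,\frac{(m+i)!\,(n+i)!}{i!\,(q+m+n)_i},
$$
the interchange being justified for $q\ge 3$ by dominated convergence, since the absolute value of the $i$-th summand is non-increasing in $q$ and, at $q=3$, forms a sequence summable in $i$. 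As $q\to\infty$ every term with $i\ge1$ tends to $0$ while the $i=0$ term is $m!\,n!\,\varphi_{m,n}(u)$, and the dominating tail estimate does not involve $u$; hence the right-hand side converges to $m!\,n!\,\varphi_{m,n}(u)$ uniformly in $u\in L$. Combined with the convergence of the prefactor, this yields $\varphi_{m,n}^{q-2}(u)\to\varphi_{m,n}(u)$ uniformly on all of $L$, a fortiori uniformly on compact subsets.

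I expect the two delicate points to be, first, pinning down the Rodrigues formula for the disc polynomials with exactly the constant $(-1)^{m+n}/(\a+1)_{m+n}$ — the Leibniz bookkeeping and the Jacobi normalization must come out precisely so that all constants telescope in the displayed formula for $\varphi_{m,n}^{q-2}$ — and second, the integration by parts, where the boundary contributions have to be controlled using only the interior regularity of $f$ supplied by Theorem~\ref{thm:Schinftycp} (or by a smoothness theorem for $\mathcal P(\Omega_{2q},L)$), rather than any assumed differentiability up to $|z|=1$. The remaining steps are routine manipulations with Pochhammer symbols and dominated convergence.
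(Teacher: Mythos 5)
Your argument is essentially correct, and up to the point where the derivatives have been moved onto $f$ it coincides with the paper's computation: your Rodrigues identity is exactly \eqref{eq:rodrigues} (with $\a=q-2$, since $(q-2)!/(m+n+q-2)!=1/(q-1)_{m+n}$), and your formula for $\varphi_{m,n}^{q-2}(u)$ as $\frac{(q-1)_m(q-1)_n}{m!\,n!\,(q-1)_{m+n}}$ times an integral of $\partial_z^m\partial_{\bar z}^n f(\cdot,u)$ against a concentrating probability measure agrees with the paper's expression after absorbing $(1-|z|^2)^{m+n}$ into $\nu_{q-2}$. Where you genuinely diverge is in the two analytic steps. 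First, the boundary terms: the paper controls them for every $q\ge2$ through Propositions~\ref{thm:Zcp} and \ref{thm:Z2cp} (the Menegatto-type smoothness theory extended to $\mathcal P(\Omega_{2q},L)$), which show that $(1-|z|^2)^{r+s}\partial^{r+s}f/\partial z^r\partial\bar z^s$ extends continuously to $\overline{\D}\times L$; you instead use interior growth bounds on the derivatives coming from the power series and only claim the formula for large $q$, which indeed suffices for the limit. Second, the passage $q\to\infty$: the paper applies Proposition~\ref{thm:Poincp} (weak convergence $\nu_\a\to\delta_0$, uniform over bounded families equicontinuous at $0$) to the continuous extension $h(z,u)=(1-|z|^2)^{m+n}\partial^{m+n}f/\partial\bar z^n\partial z^m$, while you expand $\partial_z^m\partial_{\bar z}^n f$ termwise and integrate against the rotation-invariant measure, getting the explicit series $\sum_i\varphi_{m+i,n+i}(u)\,(m+i)!(n+i)!/\bigl(i!\,(q+m+n)_i\bigr)$ and even uniformity over all of $L$. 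Your route avoids the whole smoothness machinery; the paper's route buys an argument that is logically independent of the series expansion \eqref{eq:expandcpinfty} and valid for each fixed $q\ge2$.

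That independence is the one point you must handle with care: your proof leans on Theorem~\ref{thm:Schinftycp} (you use $|\varphi_{j,k}(u)|\le\varphi_{j,k}(e_L)$ with $\sum\varphi_{j,k}(e_L)<\infty$, i.e.\ the full statement including $\varphi_{j,k}\in\mathcal P(L)$), whereas the paper's printed proof of Theorem~\ref{thm:Schinftycp} invokes \eqref{eq:powercp}, i.e.\ Theorem~\ref{thm:main2} itself, precisely to establish $\varphi_{m,n}\in\mathcal P(L)$. As written this creates a circular dependence unless one proves that membership independently (the paper points to \cite{G:M} as an alternative). The circle is easy to cut: everything you actually need follows from the Christensen--Ressel step alone, namely the representation of $\varphi_{j,k}(u)$ as a fixed linear combination of the nonnegative coefficients $a_{j,k}(u,c)$ of the functions $F_{u,c}$ in \eqref{eq:sum2}, which gives $\sup_{u}\sum_{j,k}|\varphi_{j,k}(u)|<\infty$ and a bound $\sup_{u,j,k}|\varphi_{j,k}(u)|\le C\|f\|$; these suffice for your termwise differentiation, for the dominated-convergence majorant (whose $i$-th term at $q=3$ already decays like $i^{-2}$ with only a uniform bound on the coefficients), and for the identification $\varphi_{m,n}(u)=\frac{1}{m!\,n!}\partial^{m+n}f(0,u)/\partial z^m\partial\bar z^n$. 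With that substitution your proof is complete and non-circular, though it remains organized in the opposite logical order from the paper.
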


The proofs of Theorem~\ref{thm:Schinftycp} and \ref{thm:main2} will be given in Section 3.

\section{Proofs in the case of the real Hilbert sphere}

 We need the following sharpening of Proposition 3.8 in \cite{B:P}, which is inspired by results of Ziegel \cite{Z}.

 For functions $F:[-1,1]\times L\to\C$ we denote
$$
||F||=\sup\{|F(x,u)|\mid x\in[-1,1],u\in L\}\le \infty.
$$
Note that  if $f\in\mathcal P(\S^d,L)$ then $||f||=f(1,e_L)<\infty$.

\begin{prop}\label{thm:Z1} Let $d\in\N$ and suppose that $f\in\mathcal P(\S^{d+2},L)$. Then $f(\cdot,u)$ is continuously differentiable with respect to $x$ in $]-1,1[$ and
$(1-x^2)\frac{\partial f(x,u)}{\partial x}$ extends to a continuous function on $[-1,1]\times L$
such that
\begin{equation}\label{eq:eq1}
(1-x^2)\frac{\partial f(x,u)}{\partial x}=f_1(x,u)-f_2(x,u),\quad (x,u)\in[-1,1]\times L
\end{equation}
for functions $f_i\in\mathcal P(\S^d,L)$ satisfying
\begin{equation}\label{eq:norm}
||f_i||\le d||f||, \qquad i=1,2.
\end{equation}
\end{prop}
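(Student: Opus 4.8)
The plan is to expand $f$ on the sphere $\S^{d+2}$ by Theorem~\ref{thm:BP1}, differentiate the expansion term by term, and exploit a structure relation for Gegenbauer polynomials which trades the factor $1-x^2$ for a lowering of the dimension by $2$.

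Concretely, the identity I would first establish is
\[
(1-x^2)\,\frac{d}{dx}\,c_n(d+2,x)=d\bigl(x\,c_n(d+2,x)-c_{n+1}(d,x)\bigr),\qquad n\ge 0 .
\]
It follows from two classical facts about the normalized Gegenbauer polynomials: the differentiation formula $\frac{d}{dx}c_{n+1}(d,x)=\frac{(n+1)(n+d)}{d}\,c_n(d+2,x)$ relating the even dimensions $d$ and $d+2$, and the Gegenbauer differential equation, which for $c_{n+1}(d,\cdot)$ reads $(1-x^2)y''-dxy'+(n+1)(n+d)y=0$; substituting the former (and its derivative) into the latter and dividing by $(n+1)(n+d)/d$ yields the identity at once. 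The essential features are that the right-hand side is a linear combination of $c_n(d+2,\cdot)$ and $c_{n+1}(d,\cdot)$ with coefficients of modulus at most $d$, and that (using $|c_k(d,x)|\le 1$ on $[-1,1]$) it gives the bound $|c_n(d+2,x)'|\le 2d/(1-x^2)$.

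Since $f\in\mathcal P(\S^{d+2},L)$, Theorem~\ref{thm:BP1} gives $f(x,u)=\sum_{n\ge 0}\varphi_{n,d+2}(u)\,c_n(d+2,x)$ with $\varphi_{n,d+2}\in\mathcal P(L)$, $\sum_n\varphi_{n,d+2}(e_L)=f(1,e_L)=\|f\|<\infty$, and $|\varphi_{n,d+2}(u)|\le\varphi_{n,d+2}(e_L)$. On $[-\rho,\rho]\times L$ with $\rho<1$ the termwise-differentiated series $\sum_n\varphi_{n,d+2}(u)c_n(d+2,x)'$ is dominated by $\frac{2d}{1-\rho^2}\sum_n\varphi_{n,d+2}(e_L)<\infty$, so by the Weierstrass $M$-test it converges uniformly there; by the term-by-term differentiation theorem, $f(\cdot,u)\in C^1(\,]-1,1[\,)$, $\partial f/\partial x$ is continuous on $]-1,1[\times L$, and $\partial f(x,u)/\partial x=\sum_n\varphi_{n,d+2}(u)c_n(d+2,x)'$. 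Multiplying by $1-x^2$ and inserting the identity,
\[
(1-x^2)\frac{\partial f(x,u)}{\partial x}= d\,x\,f(x,u)-d\sum_{n\ge 0}\varphi_{n,d+2}(u)\,c_{n+1}(d,x)=:f_1(x,u)-f_2(x,u);
\]
both terms on the right are series converging uniformly on all of $[-1,1]\times L$ (the second is dominated by $d\sum_n\varphi_{n,d+2}(e_L)$, and $d\,x\,f(x,u)$ since $|x|\le 1$), so $(1-x^2)\,\partial f/\partial x$ extends to a continuous function on $[-1,1]\times L$, equal to $f_1-f_2$ there.

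Finally, $f_2\in\mathcal P(\S^d,L)$ by the sufficiency part of Theorem~\ref{thm:BP1}: its Schoenberg coefficients for $\S^d$ are $0,d\varphi_{0,d+2},d\varphi_{1,d+2},\dots$, a sequence in $\mathcal P(L)$ whose sum at $e_L$ is $d\sum_n\varphi_{n,d+2}(e_L)=d\|f\|<\infty$; consequently $\|f_2\|=f_2(1,e_L)=d\|f\|$. For $f_1=d\,x\,f(x,u)$: since $f\in\mathcal P(\S^{d+2},L)\subseteq\mathcal P(\S^d,L)$ and $(\xi,\eta)\mapsto\xi\cdot\eta$ is a positive definite kernel on $\S^d$, the Schur (Hadamard) product theorem shows that $d(\xi\cdot\eta)f(\xi\cdot\eta,u^{-1}v)=f_1(\xi\cdot\eta,u^{-1}v)$ is positive definite on $\S^d\times L$, so $f_1\in\mathcal P(\S^d,L)$ and $\|f_1\|=f_1(1,e_L)=d\|f\|$; in particular $\|f_i\|\le d\|f\|$. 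The only steps needing care are the clean derivation of the Gegenbauer identity (bookkeeping of the normalizing constants) and the fact that term-by-term differentiation is legitimate only on compact subsets of $]-1,1[$ — on the closed interval $c_n(d+2,x)'$ grows like $n^2$ and the interchange fails, which is precisely why the factor $1-x^2$ must be carried along before passing to $[-1,1]$.
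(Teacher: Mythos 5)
Your proof is correct, and it takes a genuinely different route from the paper's. The paper simply imports the decomposition \eqref{eq:eq1} from the proof of Proposition 3.8 in \cite{B:P}, where both $f_1$ and $f_2$ are written as explicit series $\sum_n \psi_n(u)c_n(d,x)$ with coefficients built from the $d$- and $(d+2)$-Schoenberg functions of $f$; the only new content in the paper is the verification of the bound \eqref{eq:norm} by evaluating those series at $(1,e_L)$. You instead re-prove the whole statement from scratch: the structure relation $(1-x^2)\frac{d}{dx}c_n(d+2,x)=d\bigl(xc_n(d+2,x)-c_{n+1}(d,x)\bigr)$ (which checks out, including $d=1$ with the Chebyshev conventions $c_n(1,x)=T_n(x)$, $c_n(3,x)=U_n(x)/(n+1)$ — worth a one-line remark, since the paper treats $d=1$ separately and the Gegenbauer parameter $\la=0$ is degenerate), a local Weierstrass argument justifying termwise differentiation on compact subsets of $]-1,1[$, and then the decomposition $f_1(x,u)=dxf(x,u)$, $f_2(x,u)=d\sum_n\varphi_{n,d+2}(u)c_{n+1}(d,x)$. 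This decomposition differs from the paper's: your $f_1$ is shown to lie in $\mathcal P(\S^d,L)$ via the Schur product theorem (the same device the paper uses in Section 3 for $z^m\overline{z}^n$) rather than via an expansion in $c_n(d,x)$, and your $f_2$ involves only the $(d+2)$-Schoenberg functions, whereas the paper's $f_1$ uses the $d$-Schoenberg functions of $f$. Both give $\|f_i\|\le d\|f\|$ (in your case with equality). What your approach buys is self-containedness — it does not rely on the unpublished details of \cite{B:P} — and a cleaner identification of one of the two pieces as $dxf$; what the paper's version buys is explicit $c_n(d,\cdot)$-expansions of both $f_1$ and $f_2$, which is the form actually reused later in that reference.
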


\begin{proof} Let us first assume $d\ge 2$. By the proof of Proposition 3.8 in \cite{B:P} we have
\eqref{eq:eq1} for $(x,u)\in \left]-1,1\right[\times L$, where
 $$
f_1(x,u)=d\sum_{n=0}^\infty\frac{(2n+d-1)(n+1)}{(2n+d+1)(n+d-1)}\varphi_{n+1,d}(u)c_n(d,x)
$$ 
and
$$
f_2(x,u)=d\sum_{n=2}^\infty \frac{n-1}{n+d-1}\varphi_{n-1,d+2}(u)c_n(d,x).
$$
These formulas show that $f_1,f_2\in\mathcal P(\S^d,L)$ and that
\begin{eqnarray*}
||f_1||&=&f_1(1,e_L)\\ &=& d\sum_{n=0}^\infty\frac{(2n+d-1)(n+1)}{(2n+d+1)(n+d-1)}\varphi_{n+1,d}(e_L)\\
&\le& d\sum_{n=0}^\infty \varphi_{n+1,d}(e_L)=d\sum_{n=1}^\infty \varphi_{n,d}(e_L)\le d||f||,
\end{eqnarray*}
and
$$
||f_2||=f_2(1,e_L)=d\sum_{n=2}^\infty\frac{n-1}{n-1+d}\varphi_{n-1,d+2}(e_L)
\le d\sum_{n=1}^\infty \varphi_{n,d+2}(e_L)\le d||f||.
$$
This also
shows that the left-hand side of Equation \eqref{eq:eq1} is continuous
on $[-1,1]\times L$.

For $d=1$ Equation \eqref{eq:eq1} holds again, now with
$$
f_1(x,u)=\frac{1}{2}\varphi_{1,1}(u)c_0(1,x)+\sum_{n=1}^\infty \varphi_{n+1,1}(u)c_n(1,x)
$$
and
$$
f_2(x,u)=\sum_{n=2}^\infty \frac{n-1}{n}\varphi_{n-1,3}(u)c_n(1,x).
$$
This shows that \eqref{eq:norm} holds also in this case.
\end{proof}

Let $\mathcal E_d$ denote the subspace of continuous functions $F:[-1,1]\times L\to\C$ spanned by functions of the form $p(x)f(x,u)$, where $p$ is a  polynomial with complex coefficients and $f\in\mathcal P(\S^d,L)$. By Proposition~\ref{thm:Z1} we see that $(1-x^2)\partial/\partial x$ maps $\mathcal E_{d+2}$ into $\mathcal E_d$.

\begin{prop}\label{thm:Z2} Let $d,n\in\N$ and assume that $f\in\mathcal P(\S^{d+2n},L)$. Then 
$f(\cdot,u)\in C^n(]-1,1[)$ for $u\in L$ and for $k\le n$ we have
\begin{equation}\label{eq:E2n}
(1-x^2)^k\frac{\partial^k f(x,u)}{\partial x^k}\in\mathcal E_{d+2(n-k)}.
\end{equation}
In particular the function in Equation \eqref{eq:E2n}
has a continuous extension to $[-1,1]\times L$.
\end{prop}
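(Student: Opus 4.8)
The plan is to argue by induction on $n$, the inductive statement being ``Proposition~\ref{thm:Z2} holds for this $n$ and for every $d\in\N$''. The two ingredients are Proposition~\ref{thm:Z1} together with the remark following it, namely that the operator $T:=(1-x^2)\,\partial/\partial x$ maps $\mathcal E_{d+2}$ into $\mathcal E_d$ for every $d\in\N$; the latter in particular entails that every $g\in\mathcal E_{d+2}$ is differentiable with respect to $x$ on $\left]-1,1\right[$. The base case $n=1$ is then just a restatement of Proposition~\ref{thm:Z1}: it gives $f(\cdot,u)\in C^1(\left]-1,1\right[)$, the case $k=1$ is exactly \eqref{eq:eq1} together with $f_1,f_2\in\mathcal P(\S^d,L)\subseteq\mathcal E_d$, and the case $k=0$ is the trivial inclusion $f\in\mathcal P(\S^{d+2},L)\subseteq\mathcal E_{d+2}$.

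For the inductive step I would assume the statement for $n$ and all $d$, and take $f\in\mathcal P(\S^{d+2(n+1)},L)=\mathcal P(\S^{(d+2)+2n},L)$. Applying the inductive hypothesis with $d$ replaced by $d+2$ immediately yields $f(\cdot,u)\in C^n(\left]-1,1\right[)$ and, for $k=0,1,\dots,n$,
$$
(1-x^2)^k\frac{\partial^k f(x,u)}{\partial x^k}\in\mathcal E_{(d+2)+2(n-k)}=\mathcal E_{d+2(n+1-k)},
$$
which is precisely the assertion of the proposition for those values of $k$. The only remaining case is $k=n+1$. Here I set $g(x,u)=(1-x^2)^n\,\partial^n f(x,u)/\partial x^n$, which lies in $\mathcal E_{d+2}$ by the case $k=n$ just established; since $T$ maps $\mathcal E_{d+2}$ into $\mathcal E_d$, the function $g(\cdot,u)$ is differentiable on $\left]-1,1\right[$ and $Tg\in\mathcal E_d$. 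Because $(1-x^2)^n$ does not vanish on $\left]-1,1\right[$, the identity $\partial^n f(\cdot,u)/\partial x^n=(1-x^2)^{-n}g(\cdot,u)$ shows $\partial^n f(\cdot,u)/\partial x^n$ is differentiable there, so $\partial^{n+1}f/\partial x^{n+1}$ exists on $\left]-1,1\right[$.

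It then remains to run the product rule: $Tg=(1-x^2)\,\partial g/\partial x=-2nx\,g+(1-x^2)^{n+1}\,\partial^{n+1}f/\partial x^{n+1}$, hence
$$
(1-x^2)^{n+1}\frac{\partial^{n+1} f(x,u)}{\partial x^{n+1}}=Tg+2nx\,g.
$$
On the right $Tg\in\mathcal E_d$, while $2nx\,g\in\mathcal E_{d+2}\subseteq\mathcal E_d$ using $\mathcal P(\S^{d+2},L)\subseteq\mathcal P(\S^d,L)$ and the stability of $\mathcal E_d$ under multiplication by polynomials; thus the left-hand side lies in $\mathcal E_d=\mathcal E_{d+2(n+1-(n+1))}$, it extends continuously to $[-1,1]\times L$, and dividing by $(1-x^2)^{n+1}$ gives continuity of $\partial^{n+1}f(\cdot,u)/\partial x^{n+1}$ on $\left]-1,1\right[$, so $f(\cdot,u)\in C^{n+1}(\left]-1,1\right[)$. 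This closes the induction, and the concluding ``in particular'' is immediate since by definition every element of $\mathcal E_{d'}$ is continuous on $[-1,1]\times L$. I do not expect a serious obstacle: the argument is bookkeeping once Proposition~\ref{thm:Z1} and the remark are in hand, and the only points needing attention are the deduction that $\partial^n f/\partial x^n$ is differentiable (by dividing the differentiable $g$ by the nonvanishing factor $(1-x^2)^n$) and keeping straight which space $\mathcal E_{d'}$ each term belongs to, using the chain of inclusions $\mathcal E_{d'+2}\subseteq\mathcal E_{d'}$.
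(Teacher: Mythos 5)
Your proposal is correct and is essentially the paper's own argument: both rest on Proposition~\ref{thm:Z1} via the remark that $(1-x^2)\,\partial/\partial x$ maps $\mathcal E_{d+2}$ into $\mathcal E_d$, and the key step is the same product-rule decomposition in which the term $2kx(1-x^2)^k\,\partial^k f/\partial x^k$ is absorbed using $\mathcal E_{d'+2}\subseteq\mathcal E_{d'}$. The only difference is bookkeeping — the paper inducts on $k$ with $d,n$ fixed, while you induct on $n$ uniformly in $d$ — which yields the same chain of inclusions.
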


\begin{proof} It follows by Proposition~\ref{thm:Z1} that $f(\cdot,u)\in C^n(]-1,1[)$  for $u\in L$.

We prove \eqref{eq:E2n} by induction in $k$, and it certainly holds for $k=1$ by  Proposition~\ref{thm:Z1}.
 
Suppose \eqref{eq:E2n} holds for $k<n$. Then the function in \eqref{eq:E2n} is  differentiable
for $-1<x<1$  and differentiation and multiplication with $1-x^2$ shows that
\begin{eqnarray*}
 - 2kx(1-x^2)^{k}\frac{\partial^k f(x,u)}{\partial x^k}+(1-x^2)^{k+1} \frac{\partial^{k+1} f(x,u)}{\partial x^{k+1}}\in\mathcal E_{d+2(n-k-1)}.
\end{eqnarray*}
Using
$$
2kx(1-x^2)^{k}\frac{\partial^k f(x,u)}{\partial x^k}\in\mathcal E_{d+2(n-k)}\subseteq \mathcal E_{d+2(n-k-1)},
$$
we see that
$$
(1-x^2)^{k+1} \frac{\partial^{k+1} f(x,u)}{\partial x^{k+1}}\in\mathcal E_{d+2(n-k-1)}.
$$
\end{proof}

In the next proposition we prove the weak convergence of a certain family $(\tau_{\la})$ of measures introduced below. This convergence is decisive for the proof of our main Theorem~\ref{thm:main1}.

For $\la>-1$ define the probability measure $\tau_{\la}$ on $[-1,1]$  by
\begin{equation}\label{eq:tau}
\tau_\la=B(\la+1,1/2)^{-1}(1-x^2)^{\la}\,dx, 
\end{equation}
 where $B$ is the Beta-function.

The set $C([-1,1])$ of continuous functions $f:[-1,1]\to\ \C$ is a Banach space under the uniform norm $||f||_\infty=\sup_{x\in[-1,1]}|f(x)|$.

\begin{prop}\label{thm:Poin} Let $\mathcal F\subset C([-1,1])$ be a set of continuous functions on $[-1,1]$ such that
\begin{enumerate}
\item[{\rm (i)}] $\mathcal F$ is bounded, i.e., $\sup_{f\in\mathcal F}||f||_\infty<\infty$,
\item[{\rm (ii)}] $\mathcal F$ is equicontinuous at $x=0$, i.e., for every $\eps>0$ there exists $0<\delta<1$ such that $|f(x)-f(0)|\le \eps$ for  all $f\in\mathcal F$ and all  real $x$ with $|x|\le \delta$.
\end{enumerate}

Then $\lim_{\la\to\infty} \int f\,d\tau_{\la}=f(0)$, uniformly for $f\in\mathcal F$.

In particular, $\lim_{\la\to\infty}\tau_\la=\delta_0$ weakly, where $\delta_0$ denotes the Dirac measure concentrated at $0$. 
\end{prop}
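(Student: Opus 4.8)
The plan is to run the standard ``approximate identity'' argument: the densities $(1-x^2)^\la$ concentrate near $x=0$ as $\la\to\infty$, so each $\tau_\la$ is close to the Dirac measure $\delta_0$, and equicontinuity at $0$ is exactly what upgrades the resulting convergence to being uniform over $\mathcal F$. Fix $\eps>0$, let $\d\in(0,1)$ be the radius furnished by hypothesis (ii), and set $M=\sup_{f\in\mathcal F}||f||_\infty<\infty$ by hypothesis (i). Since each $\tau_\la$ is a probability measure, $\int f\,d\tau_\la-f(0)=\int_{-1}^1\bigl(f(x)-f(0)\bigr)\,d\tau_\la(x)$, and I split this integral over $|x|\le\d$ and over $\d<|x|\le 1$. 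On the first set $|f(x)-f(0)|\le\eps$ for every $f\in\mathcal F$, so that contribution is at most $\eps$; on the second set $|f(x)-f(0)|\le 2M$, so that contribution is at most $2M\,\tau_\la\bigl(\{\d<|x|\le1\}\bigr)$.

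The heart of the proof is to show $\tau_\la(\{\d<|x|\le1\})\to 0$ as $\la\to\infty$; note this quantity is already independent of $f$. Because $x\mapsto(1-x^2)^\la$ is decreasing on $[\d,1]$, the numerator satisfies $\int_{\d<|x|\le1}(1-x^2)^\la\,dx\le 2(1-\d^2)^\la$. For the normalizing constant one has $B(\la+1,1/2)=\int_{-1}^1(1-x^2)^\la\,dx$, and a crude lower bound suffices: restricting the integral to $|x|\le\la^{-1/2}$ gives $B(\la+1,1/2)\ge 2\la^{-1/2}(1-1/\la)^\la\ge c\,\la^{-1/2}$ for some $c>0$ and all $\la\ge 1$, since $(1-1/\la)^\la\to e^{-1}$. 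Hence $\tau_\la(\{\d<|x|\le1\})\le (2/c)\sqrt{\la}\,(1-\d^2)^\la\to 0$, the exponential factor beating the polynomial one. (Equivalently, one may invoke the asymptotics $\Gamma(\la+1)/\Gamma(\la+3/2)\sim\la^{-1/2}$.)

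Combining the two pieces, for every $f\in\mathcal F$ we get $\bigl|\int f\,d\tau_\la-f(0)\bigr|\le\eps+2M(2/c)\sqrt{\la}\,(1-\d^2)^\la$; choosing $\la_0$ so large that the second term is $\le\eps$ for all $\la\ge\la_0$ — a choice depending only on $\eps,\d,M$ and not on the individual $f$ — yields $\sup_{f\in\mathcal F}\bigl|\int f\,d\tau_\la-f(0)\bigr|\le 2\eps$ for all $\la\ge\la_0$. Since $\eps>0$ was arbitrary, this is the asserted uniform convergence. The final ``in particular'' statement is then immediate: for fixed $g\in C([-1,1])$, apply the result to the singleton family $\mathcal F=\{g\}$, which is trivially bounded and equicontinuous at $0$, to obtain $\int g\,d\tau_\la\to g(0)=\int g\,d\delta_0$ for every $g\in C([-1,1])$, i.e.\ $\tau_\la\to\delta_0$ weakly. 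I do not anticipate any genuine obstacle; the only step requiring a little care is the polynomial-in-$\la$ lower bound for $B(\la+1,1/2)$ that makes the tail estimate go through.
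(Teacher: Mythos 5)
Your proof is correct and follows essentially the same route as the paper: split $\int f\,d\tau_\la-f(0)$ at $|x|\le\delta$ versus $\delta<|x|\le1$, use equicontinuity on the first piece and the bound $2\sup_{f\in\mathcal F}\|f\|_\infty$ times the tail mass of $\tau_\la$ on the second, the tail mass tending to $0$ because $(1-\delta^2)^\la$ decays exponentially while the normalizing constant grows only like $\sqrt{\la}$. The only difference is cosmetic: the paper gets $B(\la+1,1/2)^{-1}\sim\pi^{-1/2}\la^{1/2}$ from Stirling's formula, whereas you derive an elementary lower bound $B(\la+1,1/2)\ge c\,\la^{-1/2}$ by restricting the integral to $|x|\le\la^{-1/2}$ (which works for $\la\ge 2$, say, which is all that is needed), and this suffices equally well.
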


\begin{proof} For any $0<\delta<1$ and $f\in\mathcal F$ we have
$$
\int f\,d\tau_\la -f(0)=\int_{-\delta}^\delta \bigg ( f(x)-f(0) \bigg ) \,d\tau_\la(x)+\int_{\delta\le |x|\le 1}
\bigg ( f(x)-f(0) \bigg ) \,d\tau_\la(x).
$$
Using $|f(x)-f(0)|\le 2||f||_\infty$, we get for $\la>0$
\begin{eqnarray*}
\bigg |\int f\,d\tau_\la-f(0)\bigg | &\le& \sup_{|x|\le\delta}\bigg | f(x)-f(0)\bigg |+\frac{2||f||_\infty}{B(\la+1,1/2)}\int_{\delta\le |x|\le 1}(1-x^2)^{\la}\,dx\\
&\le& \sup_{|x|\le\delta}\bigg | f(x)-f(0)\bigg | + \frac{4||f||_\infty(1-\delta)}{B(\la + 1,1/2)}(1-\delta^2)^{\la}.
\end{eqnarray*}

For given $\eps>0$ we first choose $0<\delta<1$ so that by (ii)
$$
|f(x)-f(0)|\le \eps/2,\quad\mbox{for all}\; |x|\le\delta,\;f\in\mathcal F.
$$
By Stirling's formula
$$
B(\la+1,1/2)^{-1}\sim \pi^{-1/2}\la^{1/2},\quad \la\to\infty,
$$
and $\la^{1/2}(1-\delta^2)^\la\to 0$ for $\la\to\infty$. Therefore, and using (i),
$$ 
 \sup_{f\in\mathcal F}||f||_\infty \frac{4(1-\delta)}{B(\la + 1,1/2)}(1-\delta^2)^{\la}<\eps/2
$$
for $\la\ge \Lambda_0$, where $\Lambda_0$ is sufficiently large. This shows that
$$
\sup_{f\in\mathcal F}\left|\int f\;d\tau_\la -f(0)\right|\le\eps,\quad \la\ge\Lambda_0.
 $$ 
\end{proof}

\noindent {\bf Proof of Theorem~\ref{thm:main1}:}

It is known that the Gegenbauer polynomials $C^{(\la)}_n(x)$ satisfy the Rodrigues formula, cf. \cite[(6.6.14)]{A:A:R}
$$
C^{(\la)}_n(x)=\frac{(-2)^n(\la)_n}{n!(n+2\la)_n}(1-x^2)^{1/2-\la}\frac{d^n}{d x^n}(1-x^2)^{n+\la-1/2}.
$$
For the normalized ultraspherical polynomials $c_n(d,x)$ given by \eqref{eq:Geg}, 
the Rodrigues formula reads

\begin{equation}\label{eq:Rod}
c_n(d,x)=\frac{(-1)^n}{2^n(d/2)_n}(1-x^2)^{1-d/2}\frac{d^n}{d x^n}(1-x^2)^{n+d/2-1}.
\end{equation}
Inserting this in Equation \eqref{eq:coef} we get
$$
\varphi_{n,d}(u)=\frac{N_n(d)\sigma_{d-1}}{\sigma_d}\frac{(-1)^n}{2^n(d/2)_n}\int_{-1}^1
f(x,u)\frac{d^n}{d x^n}(1-x^2)^{n+d/2-1}\,dx.
$$
We now make use of $n$ integrations by parts to get
$$
\varphi_{n,d}(u)=\frac{N_n(d)\sigma_{d-1}}{\sigma_d}\frac{1}{2^n(d/2)_n}\int_{-1}^1
\frac{\partial^{n}f(x,u)}{\partial x^n} (1-x^2)^{n+d/2-1}\,dx,
$$
because the boundary terms
$$
\frac{\partial^{k}f(x,u)}{\partial x^k}\frac{d^{n-k-1}}{d x^{n-k-1}}(1-x^2)^{n+d/2-1},\;k=0,1,\ldots,n-1
$$
vanish for $x=\pm 1$ by Proposition~\ref{thm:Z2}.
 In fact, 
$$
\frac{d^{n-k-1}}{d x^{n-k-1}}(1-x^2)^{n+d/2-1}=(1-x^2)^{k+d/2}R_k(x)
$$
for some polynomial $R_k(x)$ and
$$
 (1-x^2)^{k}\frac{\partial^{k} f(x,u)}{\partial x^k}
$$
has finite values while $(1-x^2)^{d/2}R_k(x)$ vanishes for $x=\pm 1$.

Using the measure \eqref{eq:tau} with $\la=d/2-1$, we find
$$
\varphi_{n,d}(u)=\frac{N_n(d)}{2^n(d/2)_n}\int_{-1}^1 (1-x^2)^n\frac{\partial^{n}f(x,u)}{\partial x^n}\,d\tau_{d/2-1}(x),
$$
and we note that
$$
\frac{N_n(d)}{2^n(d/2)_n}=\frac{1}{n!}\frac{(d)_{n-1}(d+2n-1)}{2^n(d/2)_n}\to\frac{1}{n!}\;\;\mbox{for}\;\; d\to\infty.
$$
By Proposition~\ref{thm:Poin} we then get that
\begin{equation}\label{eq:final}
\varphi_{n,d}(u)\to \frac{1}{n!}\left[ (1-x^2)^n\frac{\partial^{n}f(x,u)}{\partial x^n}\right]_{x=0}=\frac{1}{n!}\frac{\partial^{n}f(0,u)}{\partial x^n}.
\end{equation}
Given a compact set $K$ in $L$ the family
$$
\mathcal F:=\left\{ x\mapsto (1-x^2)^n\frac{\partial^{n}f(x,u)}{\partial x^n}\mid u\in K\right\}
$$
satisfies the conditions of Proposition~\ref{thm:Poin}, so the convergence in \eqref{eq:final} is uniform for $u$ in compact subsets of $L$.

This also implies that $u\mapsto \frac{1}{n!}\frac{\partial^{n}f(0,u)}{\partial x^n}$ belongs to $\mathcal P(L)$ and is the coefficient $\varphi_n(\cdot)$ of the power series in \eqref{eq:expandp}.
\hfill$\square$

\section{Proofs in the case of the complex Hilbert sphere}

Let us first consider a function $f\in\mathcal P(\Omega_{2q},L)$. Then we know that
$$
\overline{f(z,u)}=f(\overline{z},u^{-1}),\quad |f(z,u)|\le f(1,e),\quad z\in\overline{\D}, u\in L.
$$

To $f$ and to elements $u_1,\ldots,u_n\in L$ and numbers $c_1,\ldots,c_n\in \C$ we define a new function $F:\overline{\D}\to \C$ by
\begin{equation}\label{eq:sum1}
F(z)=\sum_{j,k=1}^n f(z,u_j^{-1}u_k)c_j\overline{c_k}.
\end{equation}
It is easy to see that $F(\overline{z})=\overline{F(z)}$, but in fact, this follows from the more general result inspired by \cite{G:M} and which can be stated as:
\begin{lemma}\label{thm:technical}
For any $f$ in $\mathcal P(\Omega_{2q},L)$, the function $F$ in \eqref{eq:sum1} belongs to $\mathcal P(\Omega_{2q})$. 
\end{lemma}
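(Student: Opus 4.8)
The plan is to verify directly from the definitions that the kernel $(\xi,\eta)\mapsto F(\xi\cdot\eta)$ is positive definite on $\Omega_{2q}$. Continuity of $F$ on $\overline{\D}$ is immediate from continuity of $f$, so the only substantive point is positive semidefiniteness of the matrices $[F(\xi_p\cdot\xi_l)]$.

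First I would fix an arbitrary finite collection $\xi_1,\dots,\xi_N\in\Omega_{2q}$ together with scalars $a_1,\dots,a_N\in\C$, and expand the quadratic form $\sum_{p,l=1}^N F(\xi_p\cdot\xi_l)\,a_p\overline{a_l}$ by inserting the definition \eqref{eq:sum1} of $F$. This turns it into the quadruple sum $\sum_{p,l=1}^N\sum_{j,k=1}^n f(\xi_p\cdot\xi_l,\,u_j^{-1}u_k)\,(a_pc_j)\overline{(a_lc_k)}$. The key step is then to reinterpret this quadruple sum as the positive-definiteness inequality for $f\in\mathcal P(\Omega_{2q},L)$ applied to the $Nn$ points $(\xi_p,u_j)\in\Omega_{2q}\times L$ with associated numbers $a_pc_j$: since by hypothesis the kernel $((\xi,u),(\eta,v))\mapsto f(\xi\cdot\eta,u^{-1}v)$ is positive definite on $\Omega_{2q}\times L$, that sum is $\ge 0$, which is exactly the inequality we need. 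Hence $F\in\mathcal P(\Omega_{2q})$.

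I do not expect a genuine obstacle here; the argument is the standard index-combining (tensor) trick, and the work is purely bookkeeping: checking that $(a_pc_j)\overline{(a_lc_k)}=a_p\overline{a_l}\,c_j\overline{c_k}$ so that the two sums coincide termwise, and that the pairing of $(\xi_p,u_j)$ with $(\xi_l,u_k)$ produces precisely the argument $(\xi_p\cdot\xi_l,\,u_j^{-1}u_k)$. As a byproduct, the hermitian symmetry $\overline{F(\xi\cdot\eta)}=F(\eta\cdot\xi)$ forced by positive definiteness, combined with $\eta\cdot\xi=\overline{\xi\cdot\eta}$ and surjectivity of $(\xi,\eta)\mapsto\xi\cdot\eta$ onto $\overline{\D}$ for $q\ge 2$, recovers the identity $F(\overline z)=\overline{F(z)}$ mentioned before the lemma.
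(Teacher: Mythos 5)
Your argument is correct and is essentially the paper's own proof: both expand the quadratic form for $F$ into a quadruple sum and recognize it as the positive-definiteness inequality for $f$ applied to the combined family of points $(\xi_p,u_j)\in\Omega_{2q}\times L$ with scalars $a_pc_j$. Nothing is missing; the closing remark about $F(\overline z)=\overline{F(z)}$ is a harmless extra observation also noted in the paper before the lemma.
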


\begin{proof} Let $\xi_1,\ldots,\xi_m\in\Omega_{2q}$ and $d_1,\ldots,d_m\in\C$ be arbitrary. We
shall prove that $S\ge 0$, where 
$$
S:=\sum_{\mu,\nu=1}^m F(\xi_\mu\cdot\xi_\nu)d_\mu\overline{d_\nu}.
$$
However, 
$$
S=\sum_{\mu,\nu=1}^m \sum_{j,k=1}^n f(\xi_\mu\cdot\xi_\nu,u_j^{-1}u_k)c_j\overline{c_k}
d_\mu\overline{d_\nu} \ge 0,
$$
because it is "a sum" belonging to the finite family of $mn$ elements from $\Omega_{2q}\times L$
$$
(\xi_1,u_1),\ldots,(\xi_1,u_n), (\xi_2,u_1),\ldots,(\xi_2,u_n),\ldots, (\xi_m,u_1),\ldots,(\xi_m,u_n)
$$ 
together with the family of scalars
$$
d_1c_1,\ldots, d_1c_n, d_2c_1,\ldots, d_2c_n,\ldots,  d_mc_1,\ldots, d_mc_n.
$$
\end{proof}

\noindent {\bf Proof of Theorem~\ref{thm:Schinftycp}:}

It is easy to see that $(\xi,\eta)\mapsto \xi\cdot\eta$ is a positive definite kernel on $\Omega_{2q}$. By the Schur product theorem for positive definite kernels, cf. \cite[Theorem 3.1.12]{B:C:R}, we see that $z^m\overline{z}^n$ belongs to $\mathcal P(\Omega_{2q})$ for $q\ge 2$ and $m,n\ge 0$. It is therefore elementary that any function of the form \eqref{eq:expandcpinfty}
with $\varphi_{m,n}\in\mathcal P(L)$ satisfying
$$
\sum_{m,n=0}^\infty \varphi_{m,n}(e_L)<\infty,
$$
belongs to $\mathcal P(\Omega_{\infty},L)$, which was  defined in \eqref{eq:cpsphereinfty}.

If we start with a continuous function $f:\overline{\D}\times L\to \C$ belonging to
$\mathcal P(\Omega_\infty,L)$,
then $F$ defined by \eqref{eq:sum1} belongs to 
$$
\mathcal P(\Omega_\infty)= \cap_{q=2}^\infty \mathcal P(\Omega_{2q})
$$
 by Lemma~\ref{thm:technical}. Using a theorem due to Christensen and Ressel, see \cite{C:R}, it can be written as
$$
F(z)=\sum_{m,n=0}^\infty a_{m,n} z^m\overline{z}^n,
$$
where $a_{m,n}\ge 0$ are uniquely determined by $F$ and satisfy $\sum a_{m,n}<\infty.$

We now use the special case of \eqref{eq:sum1} with $n=2$, $u_1=e_L, u_2=u$, $c_1=1, c_2=c$, so $F=F_{u,c}$ takes the form
\begin{equation}\label{eq:sum2}
F_{u,c}(z)=f(z,e_L)(1+|c|^2)+f(z,u)\overline{c} +f(z,u^{-1})c.
\end{equation}
For all $u\in L, c\in\C$ there exist 
$a_{m,n}(u,c)\ge 0$ with $\sum a_{m,n}(u,c)<\infty$ such that
$$
F_{u,c}(z)=\sum_{m,n=0}^\infty a_{m,n}(u,c)z^m\overline{z}^n,\quad z\in\overline{\D}.
$$
Letting $c=1,-1,i$ we obtain
\begin{eqnarray*}
F_{u,1}(z)&=& 2f(z,e_L)+f(z,u)+f(z,u^{-1})=\sum_{m,n=0}^\infty a_{m,n}(u,1)z^m\overline{z}^n,\\
F_{u,-1}(z)&=& 2f(z,e_L)-f(z,u)-f(z,u^{-1})=\sum_{m,n=0}^\infty a_{m,n}(u,-1)z^m\overline{z}^n,\\
F_{u,i}(z)&=& 2f(z,e_L) - i f(z,u)+ i f(z,u^{-1})=\sum_{m,n=0}^\infty a_{m,n}(u,i)z^m\overline{z}^n.
\end{eqnarray*}

This gives
$$
\frac{1-i}{4}F_{u,1}(z)-\frac{1+i}{4}F_{u,-1}(z)+\frac{i}{2}F_{u,i}(z)=f(z,u)=\sum_{m,n=0}^\infty \varphi_{m,n}(u)z^m\overline{z}^n,
$$ 
where
$$
\varphi_{m,n}(u):=\frac{1-i}{4}a_{m,n}(u,1)-\frac{1+i}{4}a_{m,n}(u,-1)+\frac{i}{2}a_{m,n}(u,i).
$$
 That $\varphi_{m,n}\in\mathcal P(L)$ can be seen as in \cite{G:M}, or we can use that necessarily
$$
\varphi_{m,n}(u)=\frac{1}{m!n!}\frac{\partial^{m+n}f(0,u)}{\partial z^m \partial\overline{z}^n},
$$
and that
\begin{equation}\label{eq:powercp}
\lim_{q\to\infty}\varphi_{m,n}^{q-2}(u)=\frac{1}{m!n!}\frac{\partial^{m+n} f(0,u)}{\partial z^m \partial\overline{z}^n }
\end{equation}
uniformly for $u$ in compact subsets of $L$ as stated in Theorem~\ref{thm:main2} .
 Formula \eqref{eq:powercp} proves that the functions on the right-hand side belong to $\mathcal P(L)$.
\hfill$\square$

\medskip

As preparation for the proof of Theorem~\ref{thm:main2} we shall discuss smoothness of functions from $\mathcal P(\Omega_{2q},L)$.

The smoothness results of Ziegel \cite{Z} for functions in $\mathcal P(\S^d)$ have been extended to functions in $\mathcal P(\Omega_{2q})$  in a paper by Menegatto, see  \cite{Me}. This extension required new ideas, while  a further extension to functions in $\mathcal P(\Omega_{2q},L)$ follows the same lines as in \cite{Me}, so we shall just give the results with a few indications.

For $f\in\mathcal P(\Omega_{2q+2},L)\subseteq\mathcal P(\Omega_{2q},L)$ we have the expansions, cf. Theorem~\ref{thm:MP},
$$
f(z,u)=\sum_{m,n=0}^\infty \varphi_{m,n}^{q-1}(u) R_{m,n}^{q-1}(z)=\sum_{m,n=0}^\infty \varphi_{m,n}^{q-2}(u) R_{m,n}^{q-2}(z),\quad z\in\overline{\D},u\in L,
$$
and the coefficient functions are related in the following way:

\begin{prop}\label{thm:relation-coeff}
Let $q\geq2$. If $f\in\mathcal P(\Omega_{2q+2},L)$, then for $m,n\ge 0$ and $u\in L$
\begin{equation}\label{eq:coef-func}
\varphi_{m,n}^{q-1}(u) = \frac{(m+q-1)(n+q-1)}{(q-1)(m+n+q-1)}\varphi_{m,n}^{q-2}(u) - \frac{(m+1)(n+1)}{(q-1)(m+n+q+1)}\varphi_{m+1,n+1}^{q-2}(u).
\end{equation}
In particular,
\begin{equation}\label{eq:coef-inq}
\varphi_{m,n}^{q-2}(e_L) \geq \frac{(m+1)(n+1)(m+n+q-1)}{(m+q-1)(n+q-1)(m+n+q+1)}\varphi_{m+1,n+1}^{q-2}(e_L).
\end{equation}
\end{prop}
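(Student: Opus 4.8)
The plan is to reduce \eqref{eq:coef-func} to a connection formula for disc polynomials that expresses $R^{q-2}_{m,n}$ as a linear combination of $R^{q-1}_{m,n}$ and $R^{q-1}_{m-1,n-1}$, and then to compare coefficients in the two uniformly convergent expansions of $f$. The connection formula should be obtained from the classical contiguous relation for Jacobi polynomials (see \cite{A:A:R})
$$
(2k+\a+\b+1)P^{(\a,\b)}_k(x)=(k+\a+\b+1)P^{(\a+1,\b)}_k(x)-(k+\b)P^{(\a+1,\b)}_{k-1}(x),
$$
valid for $k\ge 0$ with the convention $P^{(\a+1,\b)}_{-1}\equiv 0$. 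Specializing to $\a=q-2$, $\b=|m-n|$, $k=\min(m,n)$, and using $k+\b=\max(m,n)$ and $2k+\b=m+n$, one rewrites this in terms of the normalized Jacobi polynomials \eqref{eq:Jacobinorm} via $P^{(\a,\b)}_k(1)=(\a+1)_k/k!$; the Pochhammer ratios collapse, $(q)_k/(q-1)_k=(q+k-1)/(q-1)$ and $(q)_{k-1}/(q-1)_k=1/(q-1)$. Multiplying through by $r^{|m-n|}e^{i(m-n)\t}$ and invoking the definition of the disc polynomials (noting that $|(m-1)-(n-1)|=|m-n|$ and $\min(m-1,n-1)=\min(m,n)-1$), one should arrive, for all $m,n\ge 0$ and $z\in\overline{\D}$, at
$$
R^{q-2}_{m,n}(z)=\frac{(m+q-1)(n+q-1)}{(q-1)(m+n+q-1)}\,R^{q-1}_{m,n}(z)-\frac{mn}{(q-1)(m+n+q-1)}\,R^{q-1}_{m-1,n-1}(z),
$$
where the second term is understood to be $0$ when $m=0$ or $n=0$. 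A quick sanity check: at $z=1$ both sides equal $1$, since $(m+q-1)(n+q-1)-mn=(q-1)(m+n+q-1)$.

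Next I would insert this connection formula into the expansion $f(z,u)=\sum_{m,n\ge 0}\varphi^{q-2}_{m,n}(u)R^{q-2}_{m,n}(z)$ from Theorem~\ref{thm:MP}, which is uniformly (hence absolutely) convergent; this legitimizes splitting the double series into two and reindexing the second one by $(m,n)\mapsto(m+1,n+1)$. Collecting the coefficient of $R^{q-1}_{m,n}(z)$ gives
$$
\frac{(m+q-1)(n+q-1)}{(q-1)(m+n+q-1)}\,\varphi^{q-2}_{m,n}(u)-\frac{(m+1)(n+1)}{(q-1)(m+n+q+1)}\,\varphi^{q-2}_{m+1,n+1}(u).
$$
By uniqueness of the coefficients in the $R^{q-1}$-expansion of $f$ — which follows from the orthogonality of the disc polynomials $R^{q-1}_{m,n}$ with respect to the weight $r(1-r^2)^{q-1}\,dr\,d\t$ on $\overline{\D}$, i.e.\ from formula \eqref{eq:coefcp} with $q$ replaced by $q+1$ — this coefficient must equal $\varphi^{q-1}_{m,n}(u)$, which is exactly \eqref{eq:coef-func}.

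Finally, \eqref{eq:coef-inq} follows by putting $u=e_L$ in \eqref{eq:coef-func}: since $\varphi^{q-1}_{m,n}\in\mathcal P(L)$, the value $\varphi^{q-1}_{m,n}(e_L)$ is the diagonal entry of a positive semidefinite matrix and hence $\ge 0$, so the right-hand side of \eqref{eq:coef-func} evaluated at $e_L$ is nonnegative; multiplying by the positive factor $\frac{(q-1)(m+n+q-1)}{(m+q-1)(n+q-1)}$ and rearranging yields \eqref{eq:coef-inq}. The only genuinely delicate point is the derivation of the disc‑polynomial connection formula: one must keep careful track of the two normalizations (classical Jacobi polynomials versus the disc polynomials normalized to $1$ at the boundary), of the Pochhammer bookkeeping, and of the identities $k+\b=\max(m,n)$, $2k+\b=m+n$, and must check that the degenerate cases $\min(m,n)=0$, where $R^{q-1}_{m-1,n-1}$ is not defined but appears with coefficient $mn=0$, cause no trouble. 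Everything else is routine manipulation of an absolutely convergent double series together with the uniqueness of the expansion.
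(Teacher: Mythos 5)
Your connection formula
$$
R^{q-2}_{m,n}(z)=\frac{(m+q-1)(n+q-1)}{(q-1)(m+n+q-1)}\,R^{q-1}_{m,n}(z)-\frac{mn}{(q-1)(m+n+q-1)}\,R^{q-1}_{m-1,n-1}(z)
$$
is correct (the contiguous Jacobi relation, the Pochhammer bookkeeping, the check at $z=1$ and the degenerate cases $\min(m,n)=0$ all work out), and it is the right key identity; your deduction of \eqref{eq:coef-inq} from $\varphi^{q-1}_{m,n}(e_L)\ge 0$ is exactly the paper's argument for the second part. The gap is the sentence ``uniformly (hence absolutely) convergent; this legitimizes splitting the double series into two and reindexing''. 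Splitting $\sum_{m,n}(a_{m,n}-b_{m,n})$ into $\sum a_{m,n}-\sum b_{m,n}$ requires each half to converge, and here the connection coefficients are unbounded: both $\frac{(m+q-1)(n+q-1)}{(q-1)(m+n+q-1)}$ and $\frac{mn}{(q-1)(m+n+q-1)}$ grow like $\min(m,n)/(q-1)$ along the diagonal, so at $z=1$, $u=e_L$ the two halves are nonnegative series with terms comparable to $\min(m,n)\varphi^{q-2}_{m,n}(e_L)$, and their convergence does not follow from $\sum\varphi^{q-2}_{m,n}(e_L)<\infty$; the original series converges only because of the cancellation between the two halves. Consequently the appeal to uniqueness of the $R^{q-1}$-coefficients is premature as well: orthogonality identifies coefficients only once you may integrate the rearranged series term by term, and no mode of convergence for it has been established. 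If instead you try to rearrange rectangular partial sums, you create edge sums with weights of order $M$; controlling precisely such sums is the content of Proposition~\ref{thm:lim_coeff}, whose proof uses \eqref{eq:coef-inq}, so borrowing that would be circular.

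The argument is easily repaired while keeping your identity: do not rearrange the series, but insert the uniformly convergent expansion \eqref{eq:expandcp} of $f$ into the coefficient formula \eqref{eq:coefcp} with $q$ replaced by $q+1$. Term-by-term integration is legitimate because the series converges uniformly on $\overline{\D}\times L$ and $\overline{R^{q-1}_{m,n}(z)}\,r(1-r^2)^{q-1}$ is bounded; then apply your connection formula to each $R^{q-2}_{j,k}$ under the integral and use the orthogonality of the family $R^{q-1}_{j,k}$ with respect to $r(1-r^2)^{q-1}\,dr\,d\theta$, so that only the terms $(j,k)=(m,n)$ and $(j,k)=(m+1,n+1)$ survive; the normalizing constant $N(q+1;m,n)\,q/\pi$ in \eqref{eq:coefcp} cancels the squared norms and you land exactly on \eqref{eq:coef-func}. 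This integral route is in substance the technique of Menegatto's Proposition 4.1, which is what the paper's one-line proof invokes.
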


\begin{proof} For the first part we can use the same technique as in the proof of  \cite[Proposition 4.1]{Me}. The second part follows from the fact  that $\varphi_{m,n}^{q-1}(e_L)\ge 0$ and from the first part.
\end{proof}

\begin{prop}\label{thm:lim_coeff}
Let $q\geq 2$. If $f\in\mathcal P(\Omega_{2q+2},L)$, then for each $u\in L$ fixed
$$
\lim_{M,N\to\infty}\sum_{n=1}^{N+1}\frac{M(n+q-2)}{M+n+q-2}\varphi_{M,n-1}^{q-2}(u) = 0
$$
and
$$
\lim_{M,N\to\infty}\sum_{m=1}^{M-1}\frac{m(N+q-1)}{m+N+q-1}\varphi_{m,N}^{q-2}(u) = 0.
$$
Both limits are uniform with respect to $u\in L$.
\end{prop}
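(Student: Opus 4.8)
The plan is to control the tails of the two sums by the tail of the absolutely convergent double series $\sum_{m,n}\varphi_{m,n}^{q-1}(e_L)$ which is associated, via Theorem~\ref{thm:MP}, to the expansion of $f$ on $\Omega_{2q+2}$, the link being the three-term relation of Proposition~\ref{thm:relation-coeff}.

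First I would reindex the first sum by $k=n-1$, writing it as $\sum_{k=0}^{N}\tfrac{M(k+q-1)}{M+k+q-1}\varphi_{M,k}^{q-2}(u)$, and note that since each $\varphi_{M,k}^{q-2}\in\mathcal P(L)$ we have $|\varphi_{M,k}^{q-2}(u)|\le\varphi_{M,k}^{q-2}(e_L)$; hence it suffices to bound $\sum_{k=0}^{N}\tfrac{M(k+q-1)}{M+k+q-1}\varphi_{M,k}^{q-2}(e_L)$ by a quantity that is independent of $N$ and of $u$ and tends to $0$ as $M\to\infty$.

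The key device is the substitution $\psi_{m,n}:=\tfrac{(m+q-1)(n+q-1)}{(q-1)(m+n+q-1)}\varphi_{m,n}^{q-2}(e_L)\ge 0$, which turns \eqref{eq:coef-func} (evaluated at $u=e_L$) into the recursion $\psi_{m,n}=\varphi_{m,n}^{q-1}(e_L)+\tfrac{(m+1)(n+1)}{(m+q)(n+q)}\psi_{m+1,n+1}$, in which every term is non-negative and the coefficient is $\le 1$. Iterating along the diagonal and checking that the remainder term tends to $0$ along a subsequence — here one uses that $\sum_{j}\varphi_{m+j,n+j}^{q-2}(e_L)<\infty$ forces $\liminf_{J}J\varphi_{m+J,n+J}^{q-2}(e_L)=0$, and that $\psi_{m+J,n+J}\sim\tfrac{J}{2(q-1)}\varphi_{m+J,n+J}^{q-2}(e_L)$ as $J\to\infty$, so that $\liminf_{J}\psi_{m+J,n+J}=0$ — one arrives at the estimate $\psi_{m,n}\le\sum_{j=0}^{\infty}\varphi_{m+j,n+j}^{q-1}(e_L)$. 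Now a one-line computation gives $\tfrac{M(k+q-1)}{M+k+q-1}\varphi_{M,k}^{q-2}(e_L)=\tfrac{M(q-1)}{M+q-1}\,\psi_{M,k}\le(q-1)\psi_{M,k}\le(q-1)\sum_{j\ge 0}\varphi_{M+j,k+j}^{q-1}(e_L)$; summing over $0\le k\le N$ and relabelling $(a,b)=(M+j,k+j)$, the map $(k,j)\mapsto(M+j,k+j)$ being injective with image contained in $\{a\ge M\}$, yields a bound by $(q-1)\sum_{a\ge M}\sum_{b\ge 0}\varphi_{a,b}^{q-1}(e_L)$, which tends to $0$ as $M\to\infty$ since $\sum_{a,b}\varphi_{a,b}^{q-1}(e_L)<\infty$. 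This proves the first limit, uniformly in $u$ (and even uniformly in $N$). The second limit is entirely analogous: keep the second index fixed at $N$, use $\tfrac{m(N+q-1)}{m+N+q-1}\varphi_{m,N}^{q-2}(e_L)\le(q-1)\psi_{m,N}\le(q-1)\sum_{j\ge 0}\varphi_{m+j,N+j}^{q-1}(e_L)$, sum over $1\le m\le M-1$, and relabel to obtain a bound by $(q-1)\sum_{b\ge N}\sum_{a\ge 0}\varphi_{a,b}^{q-1}(e_L)\to 0$ as $N\to\infty$.

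I expect the only nonroutine point to be the passage from the recursion for $\psi_{m,n}$ to the inequality $\psi_{m,n}\le\sum_{j}\varphi_{m+j,n+j}^{q-1}(e_L)$, i.e.\ showing that the diagonal iteration leaks no mass at infinity; this is precisely where the availability of the finer expansion on $\Omega_{2q+2}$, that is, Proposition~\ref{thm:relation-coeff} together with $\sum_{m,n}\varphi_{m,n}^{q-1}(e_L)<\infty$, is genuinely used. Everything else is bookkeeping, and the harmless factor $\tfrac{M}{M+q-1}\le 1$ is what makes the final estimates clean.
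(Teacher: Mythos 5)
Your argument is correct, and it takes a genuinely different route from the paper. The paper bounds the first sum by $Mc_M$, where $c_M=\sum_{n\ge 1}\tfrac{n+q-2}{M+n+q-2}\varphi_{M,n-1}^{q-2}(e_L)$, and then proves $\lim_M Mc_M=0$ by deriving a recursive inequality between $c_M$ and $c_{M+1}$ from the positivity consequence \eqref{eq:coef-inq} of Proposition~\ref{thm:relation-coeff} and invoking the sequence lemma and argument of Menegatto (Lemma 3.2 in \cite{Me}); it uses only $\varphi^{q-1}_{m,n}(e_L)\ge 0$ and the summability of the $(q-2)$-level coefficients. You instead exploit the full identity \eqref{eq:coef-func}: the substitution $\psi_{m,n}$ turns it into an exact recursion with coefficients in $(0,1]$, which you telescope along diagonals; the remainder is killed by the $\liminf$ argument ($\sum_J a_J<\infty$, $a_J\ge 0$ forces $\liminf J a_J=0$, and $\psi_{m+J,n+J}=O\!\left(J\varphi^{q-2}_{m+J,n+J}(e_L)\right)$), giving $\psi_{m,n}\le\sum_{j\ge 0}\varphi^{q-1}_{m+j,n+j}(e_L)$, and the injective relabelling then majorizes the two sums by $(q-1)$ times tails of the convergent series $\sum_{a,b}\varphi^{q-1}_{a,b}(e_L)$ — so here the summability at level $q-1$ (i.e.\ the hypothesis $f\in\mathcal P(\Omega_{2q+2},L)$ via Theorem~\ref{thm:MP}) carries the load, rather than an external lemma. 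I checked the key computations: the recursion $\psi_{m,n}=\varphi^{q-1}_{m,n}(e_L)+\tfrac{(m+1)(n+1)}{(m+q)(n+q)}\psi_{m+1,n+1}$, the identity $\tfrac{M(k+q-1)}{M+k+q-1}\varphi^{q-2}_{M,k}(e_L)=\tfrac{M(q-1)}{M+q-1}\psi_{M,k}$, and the injectivity of $(k,j)\mapsto(M+j,k+j)$ all hold. What your route buys: it is self-contained (no appeal to \cite{Me}), gives an explicit quantitative bound, and yields uniformity in $N$ (respectively $M$) as well as in $u$; the paper's route is shorter on the page because it delegates the hardest step ($Mc_M\to 0$) to the cited lemma. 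One cosmetic point: the asymptotic $\psi_{m+J,n+J}\sim\tfrac{J}{2(q-1)}\varphi^{q-2}_{m+J,n+J}(e_L)$ should be stated as a two-sided bound by a constant multiple of $J\varphi^{q-2}_{m+J,n+J}(e_L)$ for large $J$ (the equivalence is vacuous when the coefficient vanishes), but this does not affect the conclusion $\liminf_J\psi_{m+J,n+J}=0$.
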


\begin{proof} Define
$$
A_{M,N}:=\sum_{n=1}^{N+1}\frac{M(n+q-2)}{M+n+q-2}\varphi_{M,n-1}^{q-2}(u).
$$
Then
$$
|A_{M,N}| \leq \sum_{n=1}^\infty\frac{M(n+q-2)}{M+n+q-2}|\varphi_{M,n-1}^{q-2}(u)| \leq 
\sum_{n=1}^\infty\frac{M(n+q-2)}{M+n+q-2}\varphi_{M,n-1}^{q-2}(e_L).
$$
Define 
$$
c_M := \sum_{n=1}^\infty\frac{n+q-2}{M+n+q-2}\varphi_{M,n-1}^{q-2}(e_L), \quad M=1,2,\ldots.
$$
We have $0\leq c_M <\infty$ and
$$
 \sum_{M=1}^\infty c_M = \sum_{M=1}^\infty \sum_{n=1}^\infty\frac{n+q-2}{M+n+q-2}\varphi_{M,n-1}^{q-2}(e_L) <\infty,
$$
because 
$$
\frac{n+q-2}{M+n+q-2} \leq1, \quad   \sum_{m,n=0}^\infty\varphi_{m,n}^{q-2}(e_L) <\infty,
$$
and then we can use Lemma 3.2 in \cite{Me}.

Since $0\leq |A_{M,N}| \leq Mc_M$ for all $M,N$, we have
$$
\lim_{M,N\to\infty}A_{M,N}=0
$$ 
provided  $\lim_{M\to\infty}Mc_M=0$. To see this we get from \eqref{eq:coef-inq} with
$m=M$ and $n$ replaced by $n-1$

\begin{eqnarray*}
c_M &\ge& \sum_{n=1}^\infty\frac{(M+1)n}{(M+q-1)(M+n+q)}\varphi_{M+1,n}^{q-2}(e_L) \\
&=& \frac{M+1}{M+q-1}\sum_{n=1}^\infty\frac{n+q-2}{M+1+n+q-2}\left(1-\frac{q-1}{n+q-2}\right)
\varphi_{M+1,n-1}^{q-2}(e_L)\\
&=& \frac{M+1}{M+q-1}c_{M+1} - \frac{M+1}{M+q-1}\sum_{n=1}^\infty\frac{q-1}{M+n+q-1}\varphi_{M+1,n-1}^{q-2}(e_L)\\
&\ge&\frac{M+1}{M+q-1}c_{M+1} - \frac{(M+1)(q-1)}{(M+q-1)^2}\sum_{n=1}^\infty \varphi_{M+1,n-1}^{q-2}(e_L).
\end{eqnarray*}
Now, $\lim_{M\to\infty}Mc_M=0$ follows as in \cite{Me}.
\end{proof}

In analogy with Theorem 1.1 in \cite{Me} we have:

\begin{prop}\label{thm:Zcp} Let $q\ge 2$ and assume that $f\in\mathcal P(\Omega_{2q+2},L)$. Then $f(\cdot,u)$ is differentiable with respect to $z$ and $\overline{z}$ in $\D$ and there exist functions $f_i\in\mathcal P(\Omega_{2q},L), i=1,2,3,4$ such that for $(z,u)\in \D\times L$
\begin{eqnarray}\label{eq:parz}
(1-|z|^2)\frac{\partial f(z,u)}{\partial z} &=& f_1(z,u)-f_2(z,u)\\ \label{eq:parbarz} 
(1-|z|^2)\frac{\partial f(z,u)}{\partial \overline{z}} &=& f_3(z,u)-f_4(z,u). 
\end{eqnarray}
In particular, the two functions to the left in \eqref{eq:parz} and in \eqref{eq:parbarz} have continuous extensions to $\overline{\D}\times L$.
\end{prop}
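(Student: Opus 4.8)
The plan is to adapt Menegatto's argument for \cite[Theorem 1.1]{Me} concerning $\mathcal P(\Omega_{2q})$, carrying the group variable $u$ along in exactly the way that Proposition~\ref{thm:Z1} does relative to the corresponding real-sphere result. By the symmetry $R^{q-2}_{m,n}(\bar z)=R^{q-2}_{n,m}(z)$ of the disc polynomials, \eqref{eq:parbarz} is the mirror image of \eqref{eq:parz} under the interchange $m\leftrightarrow n$ (equivalently, apply \eqref{eq:parz} to $z\mapsto\bar z$ and use $\overline{f(z,u)}=f(\bar z,u^{-1})$), so it suffices to produce $f_1,f_2\in\mathcal P(\Omega_{2q},L)$ satisfying \eqref{eq:parz} and to argue that the left-hand side extends continuously to $\overline{\D}\times L$; $f_3,f_4$ are then obtained by the same recipe.

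First I would start from the uniformly convergent expansion $f(z,u)=\sum_{m,n\ge0}\varphi^{q-2}_{m,n}(u)R^{q-2}_{m,n}(z)$ supplied by Theorem~\ref{thm:MP}, with $\varphi^{q-2}_{m,n}\in\mathcal P(L)$ and $\sum_{m,n}\varphi^{q-2}_{m,n}(e_L)<\infty$. Differentiating term by term with respect to $z$ for $z\in\D$ and invoking the differentiation formula for disc polynomials (available from \cite{K2}, \cite{Wu}, and used in \cite{Me}), which expresses $\partial R^{q-2}_{m,n}/\partial z$ through disc polynomials of shifted indices, one multiplies by $1-|z|^2$ and re-expands using the contiguous relations for disc polynomials that underlie Proposition~\ref{thm:relation-coeff}. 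This rewrites $(1-|z|^2)\,\partial f(z,u)/\partial z$ formally as a series in the $R^{q-2}_{m,n}(z)$ whose coefficient of $R^{q-2}_{m,n}$ splits as $\psi^{+}_{m,n}(u)-\psi^{-}_{m,n}(u)$, where each of $\psi^{+}_{m,n}$ and $\psi^{-}_{m,n}$ is a \emph{nonnegative} linear combination (with coefficients depending only on $m,n,q$) of certain $\varphi^{q-2}_{k,l}$, hence lies in $\mathcal P(L)$; one then sets $f_1:=\sum_{m,n}\psi^{+}_{m,n}(u)R^{q-2}_{m,n}(z)$ and $f_2:=\sum_{m,n}\psi^{-}_{m,n}(u)R^{q-2}_{m,n}(z)$.

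The main obstacle is to justify the convergence, i.e. to show that $\sum_{m,n}\psi^{\pm}_{m,n}(e_L)<\infty$, so that by Theorem~\ref{thm:MP} both $f_1,f_2$ belong to $\mathcal P(\Omega_{2q},L)$, are continuous on $\overline{\D}\times L$, and represent $(1-|z|^2)\,\partial f/\partial z$ on $\D\times L$ (the identity on $\D$ then extends to $\overline{\D}\times L$ by continuity, and this also legitimizes the term-by-term differentiation, so $f(\cdot,u)$ is indeed differentiable in $z$ and $\bar z$ on $\D$). The plain estimate $\sum\varphi^{q-2}_{m,n}(e_L)<\infty$ does not suffice, because differentiation followed by the re-expansion multiplies the coefficients by factors growing linearly in the indices; the extra room comes precisely from the stronger hypothesis $f\in\mathcal P(\Omega_{2q+2},L)$. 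Exactly here I would use the monotonicity inequality \eqref{eq:coef-inq} together with the vanishing of the weighted partial sums in Proposition~\ref{thm:lim_coeff} (both resting on Lemma 3.2 of \cite{Me}): these control $\sum_{m,n}\psi^{\pm}_{m,n}(e_L)$ and simultaneously show that the boundary contributions generated in the term-by-term differentiation die out. With $f_1,f_2$ (and, by the $m\leftrightarrow n$ reduction, $f_3,f_4$) in hand, \eqref{eq:parz} and \eqref{eq:parbarz} hold on $\D\times L$ and their left-hand sides extend continuously to $\overline{\D}\times L$, which is the assertion.
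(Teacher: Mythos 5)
Your plan is essentially the paper's own: the paper gives no detailed argument for this proposition, stating only that it holds ``in analogy with Theorem 1.1 in \cite{Me}'' after preparing Propositions~\ref{thm:relation-coeff} and \ref{thm:lim_coeff} as the $L$-dependent ingredients, and your sketch adapts Menegatto's proof in exactly that way (term-by-term differentiation of the expansion from Theorem~\ref{thm:MP}, re-expansion via the contiguous relations, and convergence control through \eqref{eq:coef-inq} and Proposition~\ref{thm:lim_coeff}). Your reduction of \eqref{eq:parbarz} to \eqref{eq:parz} via $\overline{f(z,u)}=f(\overline{z},u^{-1})$ is a correct and harmless shortcut.
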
   

Let $\mathcal G_{2q}$ denote the  subspace of continuous functions $F:\overline{\D}\times L\to\C$  spanned by functions of the form $p(z,\overline{z})f(z,u)$, where $p$ is a polynomial in $z$ and $\overline{z}$ with complex coefficients and $f\in\mathcal P(\Omega_{2q},L)$.

By Proposition~\ref{thm:Zcp} we see that $(1-|z|^2)\partial/\partial z$ and $(1-|z|^2)\partial/\partial \overline{z}$ maps $\mathcal G_{2q+2}$ into $\mathcal G_{2q}$.

\begin{prop}\label{thm:Z2cp}  Let $q\ge 2$ and assume that $f\in\mathcal P(\Omega_{2q+2n},L)$ for $n\ge 1$. Then $f(\cdot,u)$ is $n$ times differentiable with respect to $z$ and $\overline{z}$ in $\D$ and for $r+s\le n$
\begin{equation}\label{eq:Z2cp}
(1-|z|^2)^{r+s}\frac{\partial^{r+s}f(z,u)}{\partial z^r \partial\overline{z}^s}\in\mathcal G_{2q+2(n-r-s)}.
\end{equation}
In particular, the function in Equation \eqref{eq:Z2cp} has a continuous extension to $\overline{\D}\times L$.
\end{prop}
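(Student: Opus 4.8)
The plan is to mirror the proof of Proposition~\ref{thm:Z2} from the real case, with the single operator $(1-x^2)\,\partial/\partial x$ replaced by the two operators $(1-|z|^2)\,\partial/\partial z$ and $(1-|z|^2)\,\partial/\partial\overline z$, using Proposition~\ref{thm:Zcp} — equivalently, the remark that these two operators carry $\mathcal G_{2q+2}$ into $\mathcal G_{2q}$ — as the single-step building block. First I would record the $n$-fold differentiability of $f(\cdot,u)$ on $\D$: iterating Proposition~\ref{thm:Zcp} expresses each first derivative of $f$, off the boundary, as the product of the smooth function $(1-|z|^2)^{-1}$ with a difference of two functions from $\mathcal P(\Omega_{2q+2(n-1)},L)$, which are themselves differentiable on $\D$ as long as the index is still at least $2$; after $n$ steps one reaches $\mathcal P(\Omega_{2q},L)$, so $f(\cdot,u)$ is $n$ times differentiable in $\D$.

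Then I would prove the displayed inclusion \eqref{eq:Z2cp} by induction on $k=r+s$. The case $k=0$ is the trivial inclusion $f\in\mathcal P(\Omega_{2q+2n},L)\subseteq\mathcal G_{2q+2n}$, and $k=1$ is exactly Proposition~\ref{thm:Zcp} applied with $q$ replaced by $q+n-1$, whose associated functions $f_i$ lie in $\mathcal P(\Omega_{2q+2(n-1)},L)\subseteq\mathcal G_{2q+2(n-1)}$. For the inductive step, given $r+s=k+1$ with (say) $r\ge 1$ — the case $s\ge 1$ being identical, using $\partial/\partial\overline z$ and the second part of Proposition~\ref{thm:Zcp} — I would set
$$
g(z,u):=(1-|z|^2)^{k}\frac{\partial^{k}f(z,u)}{\partial z^{r-1}\partial\overline z^{s}},
$$
which by the induction hypothesis belongs to $\mathcal G_{2q+2(n-k)}=\mathcal G_{2(q+n-k-1)+2}$. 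Applying $(1-|z|^2)\,\partial/\partial z$ to $g$ produces an element of $\mathcal G_{2q+2(n-k-1)}$ by Proposition~\ref{thm:Zcp}; on the other hand the Leibniz rule together with $\partial_z(1-|z|^2)^{k}=-k\overline z(1-|z|^2)^{k-1}$ gives, on $\D\times L$,
$$
(1-|z|^2)\frac{\partial g}{\partial z}=(1-|z|^2)^{k+1}\frac{\partial^{k+1}f}{\partial z^{r}\partial\overline z^{s}}-k\overline z\,g .
$$
Since $-k\overline z\,g$ lies in $\mathcal G_{2q+2(n-k)}$, and $\mathcal P(\Omega_{2q+2(n-k)},L)\subseteq\mathcal P(\Omega_{2q+2(n-k-1)},L)$ yields $\mathcal G_{2q+2(n-k)}\subseteq\mathcal G_{2q+2(n-k-1)}$, rearranging shows $(1-|z|^2)^{k+1}\partial^{k+1}f/(\partial z^{r}\partial\overline z^{s})\in\mathcal G_{2q+2(n-k-1)}$, which is \eqref{eq:Z2cp} for the pair $(r,s)$, completing the induction.

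For the final assertion I would note that $r+s\le n$ forces the index $q+(n-r-s)\ge q\ge 2$, so $\mathcal G_{2q+2(n-r-s)}$ genuinely consists of continuous functions on $\overline{\D}\times L$; hence the function in \eqref{eq:Z2cp}, a priori defined only on $\D\times L$, extends continuously to $\overline{\D}\times L$. I do not anticipate any serious obstacle: all the analytic content (the differentiability and the one-step boundary behaviour) is isolated in Proposition~\ref{thm:Zcp}, and the only point needing a little care is the bookkeeping of the Leibniz remainder term $-k\overline z\,g$, which must be placed in the slightly larger space $\mathcal G_{2q+2(n-k-1)}$ via the monotonicity $\mathcal G_{2q+2}\subseteq\mathcal G_{2q}$.
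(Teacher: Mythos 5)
Your proposal is correct and follows essentially the same route as the paper: induction on $r+s$, using the one-step fact that $(1-|z|^2)\partial/\partial z$ and $(1-|z|^2)\partial/\partial\overline z$ map $\mathcal G_{2q+2}$ into $\mathcal G_{2q}$, with the Leibniz remainder $-k\overline z\,g$ absorbed via the inclusion $\mathcal G_{2q+2(n-k)}\subseteq\mathcal G_{2q+2(n-k-1)}$. The only cosmetic difference is that you run the induction "backwards" from the target pair $(r,s)$ to $(r-1,s)$, whereas the paper differentiates the inductive expression for $(r,s)$ to reach $(r+1,s)$ and $(r,s+1)$; the substance is identical.
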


\begin{proof}  We prove \eqref{eq:Z2cp} by induction  in $r+s$.

It certainly holds for $r+s=1$ by Proposition~\ref{thm:Zcp}.
Assume that it holds for $r+s<n$. 
Differentiating the function in \eqref{eq:Z2cp} with respect to $z$ and multiplication with $1-|z|^2$ shows that
\begin{eqnarray*}
(1-|z|^2)^{r+s+1}\frac{\partial^{r+s+1}f(z,u)}{\partial z^{r+1} \partial\overline{z}^s}
-(r+s)\overline{z}(1-|z|^2)^{r+s}\frac{\partial^{r+s}f(z,u)}{\partial z^{r} \partial\overline{z}^s}
\end{eqnarray*}
belongs to $\mathcal G_{2q+2(n-r-s-1)}$, and since
$$
(r+s)\overline{z}(1-|z|^2)^{r+s}\frac{\partial^{r+s}f(z,u)}{\partial z^{r} \partial\overline{z}^s}\in \mathcal G_{2q+2(n-r-s)}\subseteq
\mathcal G_{2q+2(n-r-s-1)},
$$
we see that
$$
(1-|z|^2)^{r+s+1}\frac{\partial^{r+s+1}f(z,u)}{\partial z^{r+1} \partial\overline{z}^s}\in\mathcal G_{2q+2(n-r-s-1)}.
$$ 
Differentiating  the function in \eqref{eq:Z2cp} with respect to $\overline{z}$ and multiplying with $(1-|z|^2)$ gives that
$$
(1-|z|^2)^{r+s+1}\frac{\partial^{r+s+1}f(z,u)}{\partial z^{r} \partial\overline{z}^{s+1}}\in\mathcal G_{2q+2(n-r-s-1)}.
$$ 
\end{proof}

In the next proposition we prove the weak convergence of a certain family $(\nu_{\a})$ of measures introduced below. This convergence is decisive for the proof of  Theorem~\ref{thm:main2}.

Let $\nu_\a,\a>-1,$ denote the probability measure on $\overline{\D}$ given by
\begin{equation}\label{eq:nual}
\nu_\a=\frac{\a+1}{\pi}(1-x^2-y^2)^\a  \,dx\,dy,\quad x^2+y^2<1,
\end{equation} 
and in polar coordinates the expression is
$$
\nu_\a=\frac{\a+1}{\pi}(1-r^2)^\a r\,dr\,d\t,\quad 0\le r< 1, 0\le \t < 2\pi.
$$

The set $C(\overline{\D})$ of continuous functions $f:\overline{\D}\to\ \C$ is a Banach space under the uniform norm $||f||_\infty=\sup_{z\in\overline{\D}}|f(z)|$.

\begin{prop}\label{thm:Poincp} Let $\mathcal F\subset C(\overline{\D})$ be a set of continuous functions on $\overline{\D}$ such that
\begin{enumerate}
\item[{\rm (i)}] $\mathcal F$ is bounded, i.e., $\sup_{f\in\mathcal F}||f||_\infty<\infty$,
\item[{\rm (ii)}] $\mathcal F$ is equicontinuous at $z=0$, i.e., for every $\eps>0$ there exists $0<\delta<1$ such that $|f(z)-f(0)|\le \eps$ for  all $f\in\mathcal F$ and all  complex  $z$ with $|z|\le \delta$.
\end{enumerate}

Then $\lim_{\a\to\infty} \int f\,d\nu_{\a}=f(0)$, uniformly for $f\in\mathcal F$.

In particular, $\lim_{\a\to\infty}\nu_\a=\delta_0$ weakly. 
\end{prop}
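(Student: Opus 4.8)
The plan is to mimic the proof of Proposition~\ref{thm:Poin} almost verbatim, the only real difference being that the relevant tail mass of $\nu_\a$ can now be evaluated in closed form. First I would set $M:=\sup_{f\in\mathcal F}\|f\|_\infty$, which is finite by (i). For $0<\delta<1$ and $f\in\mathcal F$ I split
$$
\int f\,d\nu_\a-f(0)=\int_{|z|\le\delta}\bigl(f(z)-f(0)\bigr)\,d\nu_\a(z)+\int_{\delta\le|z|<1}\bigl(f(z)-f(0)\bigr)\,d\nu_\a(z).
$$
Since $\nu_\a$ is a probability measure, the modulus of the first integral is at most $\sup_{|z|\le\delta}|f(z)-f(0)|$, and that of the second is at most $2M\,\nu_\a(\{z:\delta\le|z|<1\})$.

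The one computation to carry out is the evaluation of this tail mass. Passing to polar coordinates and substituting $t=r^2$,
$$
\nu_\a(\{z:\delta\le|z|<1\})=2(\a+1)\int_\delta^1(1-r^2)^\a r\,dr=(\a+1)\int_{\delta^2}^1(1-t)^\a\,dt=(1-\delta^2)^{\a+1}.
$$
In contrast to the real case treated in Proposition~\ref{thm:Poin}, no Stirling-type asymptotics is needed here: the tail mass equals $(1-\delta^2)^{\a+1}$ exactly, and this tends to $0$ as $\a\to\infty$ for every fixed $\delta\in(0,1)$.

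Given $\eps>0$, I would then invoke (ii) to pick $\delta\in(0,1)$ with $|f(z)-f(0)|\le\eps/2$ for all $|z|\le\delta$ and all $f\in\mathcal F$, and afterwards pick $A_0$ so large that $2M(1-\delta^2)^{\a+1}<\eps/2$ whenever $\a\ge A_0$. Combining the two bounds yields $\sup_{f\in\mathcal F}\bigl|\int f\,d\nu_\a-f(0)\bigr|\le\eps$ for $\a\ge A_0$, which is the assertion; the weak convergence $\nu_\a\to\delta_0$ follows by applying this to the singleton family $\mathcal F=\{f\}$ for an arbitrary $f\in C(\overline{\D})$, which is trivially bounded and equicontinuous at $0$. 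I do not anticipate any genuine obstacle: this is a routine Dirac-sequence argument, and the only subtle point is reserved for the later application of this proposition, where one must verify — via Proposition~\ref{thm:Z2cp} — that the family of functions $z\mapsto(1-|z|^2)^{r+s}\partial^{r+s}f(z,u)/\partial z^r\partial\overline{z}^s$, with $u$ ranging over a compact subset of $L$, is bounded and equicontinuous at $z=0$, exactly as Proposition~\ref{thm:Z2} is used in Section~2.
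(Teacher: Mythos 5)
Your proposal is correct and follows essentially the same argument as the paper: split the integral at radius $\delta$, bound the inner part via equicontinuity at $0$, and bound the annulus contribution by $2\|f\|_\infty$ times the tail mass, which (as in the paper) evaluates exactly to $(1-\delta^2)^{\a+1}$ and tends to $0$ as $\a\to\infty$, with no Stirling-type estimate needed. The concluding choice of $\delta$ and then $\a$ large matches the paper's proof, so there is nothing to add.
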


\begin{proof}
For any $0<\delta<1$ and $f\in\mathcal F$ we have
$$
\int f\,d\nu_\a-f(0)=\frac{\a+1}{\pi}\int_0^{2\pi}\left(\int_0^\delta +\int_\delta^1 (f(re^{i\t})-f(0)) r(1-r^2)^\a \,dr\right) d\t,
$$
hence
\begin{eqnarray*}
\left|\int f\,d\nu_\a-f(0)\right|&\le& \sup_{|z|\le\delta}|f(z)-f(0)|+ 2||f||_\infty\frac{\a+1}{\pi}\int_0^{2\pi}\int_\delta^1 r(1-r^2)^\a\,dr\,d\t\\
&=&  \sup_{|z|\le\delta}|f(z)-f(0)|+ 2||f||_\infty(1-\delta^2)^{\a+1}.
\end{eqnarray*}

For given $\eps>0$, we first choose $\delta>0$ so small that the first term is smaller than $\eps/2$.

With this $\delta$, the second term tends to zero as $\a\to\infty$, hence $\le\eps/2$ for $\a$ sufficiently large. 
\end{proof}

\medskip
\noindent {\bf Proof of Theorem~\ref{thm:main2}:}

If $f\in\mathcal P(\Omega_\infty,L)$, then for every $q\ge 2$
\begin{equation}
f(z,u) = \sum_{m,n=0}^\infty \varphi_{m,n}^{q-2}(u)R_{m,n}^{q-2}(z), \quad (z,u)\in\overline{\D}\times L,
\end{equation}
where $\varphi_{m,n}^{q-2}\in \mathcal P(L)$ satisfy
$$
\sum_{m,n=0}^\infty \varphi_{m,n}^{q-2}(e_L)<\infty,
$$
and
\begin{equation}
\varphi_{m,n}^{q-2}(u) = N(q;m,n)\int_{\overline{\D}}f(z,u)\overline{R_{m,n}^{q-2}(z)}d\nu_{q-2}(z)
\end{equation} 
by Theorem~\ref{thm:MP} and \eqref{eq:nual}.

There is a formula of  Rodrigues type for the disc polynomials, see \cite[Eq. (2.6)]{Wu}:
\begin{equation}\label{eq:rodrigues}
(1-|z|^2)^{q-2} R_{m,n}^{q-2}(z) = \frac{(-1)^{m+n}(q-2)!}{(m+n+q-2)!} \frac{\partial^{m+n}}{\partial \overline{z}^m \partial z^n}(1-|z|^2)^{m+n+q-2}.
\end{equation}
Thus, using $\overline{R_{m,n}^{q-2}(z)}=R_{n,m}^{q-2}(z)$,

\begin{eqnarray*}
\lefteqn{\varphi_{m,n}^{q-2}(u)=}\\ 
&& \frac{q-1}{\pi}\frac{(-1)^{m+n}(q-2)!}{(m+n+q-2)!}N(q;m,n)\int_{\overline{\D}}  f(z,u) \frac{\partial^{m+n}}{\partial \overline{z}^n \partial z^m}(1-|z|^2)^{m+n+q-2} dxdy.
\end{eqnarray*} 
Denote by $I$ the integral in the previous equation. Now, note that
\begin{eqnarray*}
I &=& \int_{\overline{\D}}  f(z,u) \frac{\partial}{\partial z}\left[\frac{\partial^{m+n-1}}{\partial \overline{z}^n \partial z^{m-1}}(1-|z|^2)^{m+n+q-2}\right] dxdy \\
 &=& \int_{\overline{\D}}   \frac{\partial}{\partial z}\left[f(z,u)\frac{\partial^{m+n-1}}{\partial \overline{z}^n \partial z^{m-1}}(1-|z|^2)^{m+n+q-2}\right] dxdy  \\
 &-& \int_{\overline{\D}}   \frac{\partial}{\partial z}f(z,u) \frac{\partial^{m+n-1}}{\partial \overline{z}^n\partial z^{m-1}}(1-|z|^2)^{m+n+q-2} dxdy.
\end{eqnarray*} 
By Green's Theorem,
\begin{equation}\label{eq:rel_z}
\int_{\overline{\D}} \frac{\partial g}{\partial \overline{z}}(z) dxdy = -\frac{i}2\int_{\partial\D} g(z) dz 
\end{equation}
and 
\begin{equation}\label{eq:rel_conj_z}
\int_{\overline{\D}} \frac{\partial g}{\partial {z}}(z) dxdy =\frac{i}2\int_{\partial\D} g(z) d\overline{z} 
\end{equation}
for a continuously differentiable function $g$ on $\overline{\D}$.
Using \eqref{eq:rel_conj_z} we get
\begin{eqnarray*}
\begin{split}
I &= \frac{i}2\int_{\partial\overline{\D}}  f(z,u)\frac{\partial^{m+n-1}}{\partial \overline{z}^n \partial z^{m-1}}(1-|z|^2)^{m+n+q-2} d\overline{z}\\   
 &- \int_{\overline{\D}}   \frac{\partial}{\partial z}f(z,u) \frac{\partial^{m+n-1}}{\partial \overline{z}^n \partial z^{m-1}}(1-|z|^2)^{m+n+q-2} dxdy.
\end{split}
\end{eqnarray*}
Since 
$$
\frac{\partial^{m+n-1}}{\partial \overline{z}^n \partial z^{m-1}}(1-|z|^2)^{m+n+q-2}
$$ 
is the product of a polynomial in $z$ and $\overline{z}$ by $(1-|z|^2)^{q-1}$, we have
$$
f(z,u)\frac{\partial^{m+n-1}}{\partial \overline{z}^n \partial z^{m-1}}(1-|z|^2)^{m+n+q-2} = 0, \quad z\in\partial\overline{\D}. 
$$
Therefore
$$
I = 
 - \int_{\overline{\D}}   \frac{\partial}{\partial z}f(z,u) \frac{\partial^{m+n-1}}{\partial \overline{z}^n \partial z^{m-1}}(1-|z|^2)^{m+n+q-2} dxdy,
$$
and similarly by \eqref{eq:rel_z}
$$
I = 
 - \int_{\overline{\D}}   \frac{\partial}{\partial \overline{z}}f(z,u) \frac{\partial^{m+n-1}}{\partial \overline{z}^{n-1} \partial z^{m}}(1-|z|^2)^{m+n+q-2} dxdy.
$$
We now make further integrations by parts, a total of $m$ integrations with respect to $z$ and $n$ with respect to $\overline{z}$. We need the following terms to vanish on the boundary of $\D$
\begin{equation}\label{eq:boundary1}
\frac{\partial^{l}f(z,u)}{\partial z^l}\frac{\partial^{m+n-l-1}}{\partial\overline{z}^n\partial z^{m-l-1}}(1-|z|^2)^{m+n+q-2}, \quad l=1,2,\ldots,m-1,
\end{equation} 
and
\begin{equation}\label{eq:boundary2}
\frac{\partial^{k+m}f(z,u)}{\partial \overline{z}^k\partial z^m}\frac{\partial^{n-k-1}}{\partial \overline{z}^{n-k-1}}(1-|z|^2)^{m+n+q-2}, \quad k=0,1,\ldots,n-1.
 \end{equation}
This is true because
$$
\frac{\partial^{r+s}}{\partial\overline{z}^r\partial z^s}(1-|z|^2)^{N}=p(z,\overline{z})(1-|z|^2)^{N-r-s},\quad r+s\le N
$$
for a polynomial $p$ in $z,\overline{z}$. Therefore the terms in \eqref{eq:boundary1},\eqref{eq:boundary2} are of the form $(1-|z|^2)^{q-1}F(z,u)$, where $F$ is continuous on $\overline{\D}\times L$ by Proposition~\ref{thm:Z2cp}. The terms then vanish on the boundary of $\D$ because $q\ge 2$.
 
We obtain
\begin{eqnarray*}
\lefteqn{\varphi_{m,n}^{q-2}(u)=}\\ 
&& \frac{q-1}{\pi}\frac{(q-2)!}{(m+n+q-2)!}N(q;m,n)\int_{\overline{\D}}  \frac{\partial^{m+n}}{\partial \overline{z}^n \partial z^m}f(z,u) (1-|z|^2)^{m+n+q-2} dxdy,\\
&=& N(q;m,n)\frac{(q-2)!}{(m+n+q-2)!}\int_{\overline{\D}} \frac{\partial^{m+n}}{\partial \overline{z}^n \partial z^m} f(z,u) (1-|z|^2)^{m+n} d\nu_{q-2}(z).
\end{eqnarray*} 
We have
\begin{eqnarray*}
\lefteqn{N(q;m,n)\frac{(q-2)!}{(m+n+q-2)!}=}\\
 &&\frac{1}{m!n!}\frac{q-1+m+n}{q-1}\, \frac{(q-2+m)!}{(q-2)!} \,\frac{(q-2+n)!}{(q-2+m+n)!}.
\end{eqnarray*}
Using 
$$
(a+n)! =a! (a+1)_n,
$$
  we find for  $a=q-2$
$$
N(q;m,n) \frac{(q-2)!}{(m+n+q-2)!}=
 \frac1{m!n!}\, \frac{q-1+m+n}{q-1} \, \frac{(q-1)_m (q-1)_n}{(q-1)_{m+n}},
$$
and then
$$
N(q;m,n) \frac{(q-2)!}{(m+n+q-2)!}\longrightarrow \frac1{m!n!}, \quad q\to\infty.
$$

The function
$$
h(z,u):= (1-|z|^2)^{m+n} \frac{\partial^{m+n}}{\partial \overline{z}^n \partial z^m} f(z,u)
$$
is continuous on $\overline{\D}\times L$ by Proposition~\ref{thm:Z2cp}, 
and therefore the  family $\mathcal F$ of the functions $h(\cdot,u)\in C(\overline{\D})$, 
where $u$ belongs to a compact subset of $L$, is bounded and equicontinuous at $z=0$. 

By Proposition~\ref{thm:Poincp} it follows that

$$
\varphi_{m,n}^{q-2}(u) \to \frac1{m!n!}\frac{\partial^{m+n}}{\partial \overline{z}^n \partial z^m} f(0,u), \quad q\to\infty,
$$
uniformly for $u$ in compact subsets of $L$.
\hfill$\square$

\begin{rem}\label{thm:remark} {\rm
It is known and easy to see that the disc polynomials $R^{\a}_{m,n}(z)$ have the following limit property
\begin{equation}\label{eq:disclim}
\lim_{\a\to\infty} R^{\a}_{m,n}(z)=z^m\overline{z}^n,\quad z\in\D
\end{equation}
for each $m,n\ge 0$ fixed, cf. \cite[(2.12)]{Wu}. 

This is the analogue of the following limit result for the normalized Gegenbauer polynomials
$$
\lim_{\la\to\infty}C_n^{(\la)}(x)/C_n^{(\la)}(1) =x^n,\quad -1<x<1
$$
for each $n\ge 0$. Schoenberg  \cite[p. 103]{S} proved that this convergence is uniform in $n\ge 0$ for fixed $x$, and this was the clue to his proof of the representation theorem for $\mathcal P(\S^\infty)$, cf. \cite[Theorem 2]{S}.

A proof of the theorem of Christensen and Ressel or the more general Theorem~\ref{thm:Schinftycp} can be given following the ideas of Schoenberg provided that one can prove that the convergence in \eqref{eq:disclim} is uniform in $m,n\ge 0$ for each fixed $z\in\D$.

We have not been able to settle this question, which is equivalent to the following property of the normalized Jacobi polynomials cf. \eqref{eq:Jacobinorm}
$$
\lim_{\a\to\infty} ((1+x)/2)^{\b/2}R_n^{(\a,\b)}(x)= ((1+x)/2)^{n+\b/2},\quad -1<x<1
$$ 
uniformly in $n,\b\in\N_0$.
}
\end{rem}

\noindent{\bf Acknowledgments}

The first two named authors want to thank  the Department of Mathematics at Universidad T{\'e}cnica Federico Santa Maria for hospitality during their visit.\\ 
The travel of the first author to Chile was supported by A. Collstrop's Foundation.
\\
The second author was partially supported by S{\~a}o Paulo Research Foundation (FAPESP) under grants 2016/03015-7 and 2014/25796-5.

\noindent
Christian Berg\\
Department of Mathematical Sciences, University of Copenhagen\\
Universitetsparken 5, DK-2100, Denmark\\
e-mail: {\tt{berg@math.ku.dk}}

\vspace{0.4cm}
\noindent
Ana P. Peron\\
Departamento de Matem{\'a}tica, ICMC-USP-S{\~a}o  Carlos\\
Caixa Postal 668, 13560-970 S{\~a}o Carlos SP, Brazil\\
e-mail: {\tt{apperon@icmc.usp.br}} 

\vspace{0.4cm}
\noindent
Emilio Porcu \\
Department of Mathematics, Universidad T{\'e}cnica Federico Santa Maria\\
Avenida Espa{\~n}a 1680, Valpara{\'\i}so, 2390123, Chile\\
e-mail: {\tt{emilio.porcu@usm.cl}}

\end{document}